\documentclass[a4paper, 10pt]{article}
\usepackage{mathpaper}
\usepackage{mathshort}
\usepackage{tgpagella}
\usepackage[T1]{fontenc}
\usepackage[final]{showlabels}

\newcommand{\aks}{\mathcal{KS}_{\mathrm{ad}}}

\title{On Cowling's $\rmL^p$-integrability conjecture for Kunze--Stein groups}
\author{Siwei Liang}
\date{\today}

\begin{document}
    \maketitle
    \begin{abstract}
        We prove a stronger form of Cowling's $\rmL^p$-integrability conjecture for an admissible class of Kunze--Stein groups containing all $S$-algebraic groups and all closed CCR boundary-transitive subgroups of tree automorphism groups. We also disprove a cyclic version formulated by Samei--Wiersma.
    \end{abstract}

\section{Introduction}
    In his 1978 paper \cite{cowling1978the-kunze-stein}, M.~Cowling established the Kunze--Stein phenomenon for every connected semisimple Lie group with finite center: for $1\leq r<2$, convolution extends to a bounded bilinear map $\rmL^r(G) \times \rmL^2(G) \to \rmL^2(G)$, generalizing the earlier work of Kunze--Stein \cite{kunze-stein1960uniformly} on $\SL(2,\bbR)$.
    More generally, a \emph{Kunze--Stein (KS) group} is a unimodular locally compact group $G$ satisfying this property.
    In the same paper, Cowling formulated the following conjecture.
    \begin{conjecture}[{\cite[p.~233]{cowling1978the-kunze-stein}}]\label{conj cowling: Lp coeff KS groups}
        Let $G$ be a Kunze--Stein group and $\pi$ be an irreducible unitary representation of $G$. If a nonzero matrix coefficient of $\pi$ lies in $\Lspace^p(G)$ for some $p\in (2,\infty)$, then all matrix coefficients of $\pi$ lie in $\rmL^p(G)$ as well.
    \end{conjecture}

    More recently, Samei--Wiersma \cite{samei-wiersma2024exotic} obtained a near solution to this conjecture by proving the statement with $\Lspace^p$ replaced by $\Lspace^{p+}$ (see \autoref{thm samei-wiersma: almost Lp implies uniformly Lp, kunze-stein unimodular}). However, they formulated and attributed to Cowling the following cyclic version of \autoref{conj cowling: Lp coeff KS groups}.
    \begin{conjecture}[{\cite[Conj 1.4]{samei-wiersma2024exotic}}]\label{conj samei-wiersma: cyclic}
        Let $G$ be a Kunze--Stein group and $(\pi, \scrV)$ be a unitary representation of $G$ with a cyclic vector $\bfv$ such that $\Innerprod{\pi(\cdot)\bfv,\bfv}\in \Lspace^p(G)$ for some $p\in (2,\infty)$, then all matrix coefficients of $\pi$ lie in $\Lspace^p(G)$ as well.
    \end{conjecture}

    \begin{center}
        \emph{The purpose of the present paper is to prove a strengthening of \autoref{conj cowling: Lp coeff KS groups} for the major known classes of KS groups and provide a counterexample to \autoref{conj samei-wiersma: cyclic}.}
    \end{center}
    
    To the best of our knowledge, major known classes of KS groups include:
    \begin{enumerate}[label=(\roman*)]
        \item connected semisimple Lie groups with finite center (Cowling \cite{cowling1978the-kunze-stein});\label{ks1}
        \item groups of $\bbF$-rational points of simply connected simple algebraic groups over a nondiscrete totally disconnected local field $\bbF$ (Veca \cite{veca2002the-kunze-stein});\label{ks2}
        \item closed boundary-transitive subgroups of automorphism groups of thick (semi-)homogeneous trees (Nebbia \cite{nebbia1988groups}).\label{ks3all}
    \end{enumerate}
    Recall that a locally compact second countable (lcsc) group $G$ is \emph{CCR} or \emph{liminal} if for every irreducible unitary representation $\pi$ and every $\phi\in\Cc(G)$, the operator $\pi(\phi)$ is compact, \emph{cf.} \cite[Def 6.E.7]{bekka-harpe2020unitary}.
    In this paper, we will not directly address the class \ref{ks3all} but its subclass:
    \begin{enumerate}
        \makeatletter
        \item[(iii')] \def\@currentlabel{(iii')}\label{ks3} closed CCR boundary-transitive subgroups of automorphism groups of thick (semi-)homogeneous trees.
        \makeatother
    \end{enumerate}
    The equality of \ref{ks3all} and \ref{ks3} amounts to Nebbia's CCR conjecture \cite{nebbia1999groups} which is open in general. It is however known that the class \ref{ks3} contains:
    \begin{itemize}
        \item all those in \ref{ks3all} with Tits' independence property (Amann \cite{amann2003groups});
        \item Radu groups in the sense of Semal \cite{semal2024radu}.
    \end{itemize}

    To formulate our theorem, let us consider the following class.
    \begin{definition}
        The \emph{admissible Kunze--Stein class} $\aks$ is defined to be the smallest class of lcsc groups which contains all compact groups and the groups in \ref{ks1}, \ref{ks2}, \ref{ks3} and is closed under the elementary operations:
        \begin{enumerate}[label=(\alph*)]
        \item finite direct products,\label{a}
        \item closed finite-index subgroups and overgroups,\label{b}
        \item quotients by compact normal subgroups.\label{c}
    \end{enumerate}
    \end{definition}
    \begin{remark*}
        The class $\aks$ consists of Kunze--Stein groups by \cite[Lem 7.1]{cowling1978the-kunze-stein}. It contains all the $S$-algebraic groups in the sense of Gorodnik--Nevo \cite[Def 3.4]{gorodnik-nevo2009the-ergodic}.
    \end{remark*}

    Our main result is the following strengthening of \autoref{conj cowling: Lp coeff KS groups} for $\aks$.
    \begin{theorem}\label{mythm: S-algebraic, Lp irreducible ur}
        Let $G\in\aks$ and $\pi$ be an irreducible unitary representation of $G$. If a nonzero matrix coefficient of $\pi$ lies in $\Lspace^p(G)$ for some $p\in (2,\infty)$, then there exists some $q<p$ such that all matrix coefficients of $\pi$ lie in $\Lspace^{q}(G)$.
    \end{theorem}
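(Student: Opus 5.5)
The plan is to combine the Samei--Wiersma result (\autoref{thm samei-wiersma: almost Lp implies uniformly Lp, kunze-stein unimodular}) with a \emph{discreteness} (gap) statement for the integrability exponents of irreducible representations of groups in $\aks$. For an irreducible unitary $\pi$, let $p(\pi)$ be the infimum of all $r$ such that some nonzero matrix coefficient of $\pi$ lies in $\rmL^r(G)$. Samei--Wiersma give: if a nonzero coefficient lies in $\rmL^p$, then all coefficients lie in $\rmL^{p'}$ for every $p'>p$, so all coefficients lie in $\rmL^{r}$ for every $r>p(\pi)$. The theorem therefore reduces to showing that $p(\pi)<p$ whenever some nonzero coefficient lies in $\rmL^p$. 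Equivalently, we must exclude the boundary case in which a nonzero coefficient lies in $\rmL^{p(\pi)}$ while $p(\pi)$ is the critical exponent. The sharper claim to aim for is the following: for $G\in\aks$, if a nonzero coefficient of $\pi$ lies in $\rmL^{p}$ with $p>2$, then either $\pi$ is square integrable or the $K$-finite coefficients decay at an explicit exponential rate strictly faster than $\rmL^p$ requires.

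The first step is to check that the claimed property (``$\rmL^p$ for some coefficient implies $\rmL^q$ for all coefficients, with $q<p$'') is stable under the elementary operations \ref{a}--\ref{c}. For products, irreducibles are tensor products $\pi_1\otimes\pi_2$ (CCR/type I suffices here). A nonzero coefficient in $\rmL^p(G_1\times G_2)$ yields, by Fubini, a nonzero coefficient of each factor in $\rmL^p$. Taking $q=\max(q_1,q_2)$, all elementary tensor coefficients lie in $\rmL^q$. For general vectors, use the uniform bound from Samei--Wiersma, namely boundedness of $\bfv\otimes\bfw\mapsto\Innerprod{\pi(\cdot)\bfv,\bfw}$ into $\rmL^{q}$, and argue by density. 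For finite-index subgroups and overgroups, use restriction and induction: finitely many irreducible pieces, and integrability is compared on finitely many cosets. Quotients by compact normal subgroups are trivial, since coefficients factor through the quotient and the Haar measures correspond.

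The second step treats the base cases. Compact groups are immediate because every function is bounded on a finite measure space. For \ref{ks1} and \ref{ks2}, use the Harish-Chandra/Casselman asymptotic expansion of $K$-finite matrix coefficients: along $A^+$ they are finite sums of terms (exponent characters)$\times$(polynomials). Combined with the Cartan decomposition and the volume growth $\delta(a)$, this shows that the $\rmL^r$-integrability of a $K$-finite coefficient is governed by a finite set of leading exponents. Hence the set of $r$ for which it is integrable is an \emph{open} half-line $(p(\pi),\infty)$ unless the relevant leading exponent is attained without polynomial factor at the integrability boundary. A direct computation shows that at exactly $r=p(\pi)$ the integral diverges, since it involves $\int_{A^+}\delta(a)\,|\chi(a)|^{r}\,da$ with $|\chi|^r\delta$ of zero exponent along some wall. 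So $\rmL^p$ forces $p>p(\pi)$, and we may take $q\in(p(\pi),p)$. Admissibility ensures that $K$-finite coefficients exist and reduce the general case, after which Samei--Wiersma extends to all coefficients. For \ref{ks3}, the CCR hypothesis and the Figà-Talamanca--Nebbia/Amann classification apply: irreducibles are spherical, special, or cuspidal (square integrable, hence $\rmL^q$ for all $q\ge2$, with $q=2<p$). Spherical coefficients are explicit combinations $c_1\lambda_1^{n}+c_2\lambda_2^{n}$ on spheres of size $\asymp(\text{branching})^n$, and the same divergence-at-the-boundary computation applies.

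The main obstacle is the boundary divergence in the Lie and $p$-adic cases in full generality. The leading exponent of a particular nonzero coefficient might be cancelled, so one must show that the critical exponent of \emph{some} coefficient is not attained. My approach would be to pick the coefficient attaining the dominant exponent and check non-integrability at the critical value along a cone in $A^+$ adjacent to the relevant wall.
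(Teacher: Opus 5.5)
\textbf{The tree case (iii') has a genuine gap.} You invoke the Fig\`a-Talamanca--Nebbia/Amann classification (every irreducible is spherical, special or cuspidal). That classification is only established under extra hypotheses: Tits' independence property (Amann) or Radu groups (Semal). CCR alone does not give it. The paper points out that for a general closed CCR boundary-transitive group, non-tempered irreducibles need not be spherical. So your explicit $c_1\lambda_1^n+c_2\lambda_2^n$ computation does not cover the class.

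What you need is an exponential-polynomial description of the $K$-finite coefficients of an \emph{arbitrary} admissible irreducible $\pi$. The paper gets this by adapting leading exponents to trees:
\begin{itemize}
\item For a $K$-type $\tau$, it chooses a compact open $U\subset\ker\tau$ that is tidy for a hyperbolic element $a$ (Willis). Then $U=U_+U_-=U_-U_+$ forces $(UaU)^n=Ua^nU$, hence $\mu_1^{*n}=\mu_n$.
\item Consequently $\pi(\mu_n)$ acts on $\scrV^U$ as $A^n$ for a single operator $A$. The space $\scrV^U$ is finite-dimensional by admissibility, which follows from CCR for totally disconnected groups.
\item The Jordan form of $A$, together with $\mu(Ka^nK)\asymp\delta^{\kappa n/2}$, shows that a suitably chosen $K$-finite coefficient lies in $\rmL^q$ iff $\delta^{\kappa/2}R^q<1$. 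This condition is open in $q$.
\end{itemize}

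\textbf{A second, smaller gap.} You plan to ``pick the coefficient attaining the dominant exponent'' and show it diverges at the critical exponent. This only works if you know that \emph{this} coefficient lies in $\rmL^p$. Samei--Wiersma only places it in $\rmL^{p+}$, which cannot exclude the boundary case.

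The missing step is the paper's reduction to $K$-finite vectors. By admissibility, $\pi(\calH(G))\bfv_0=\scrV_K$ for every nonzero $\bfv_0$. Hence every $K$-finite coefficient has the form $\overline{\psi}*c*\check\phi$ with $\phi,\psi\in\calH(G)\subset\rmL^1$, and lies in $\rmL^p$ by Young's inequality.

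The same reduction is what your product step needs. For non-elementary tensors, Fubini only gives a nonzero element of $A_{\pi_1}\cap\rmL^p$, not a matrix coefficient of $\pi_1$. Also, after Samei--Wiersma you only get uniform $\rmL^{q+}$, so you must take an exponent strictly between $q$ and $p$.

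Finally, in the semisimple cases the boundary divergence you flag as the main obstacle need not be reproved. It is exactly what Casselman--Mili\v{c}i\'{c} (Thm 8.14) and Silberger (Prop 2.5) provide: a characterization by finitely many strict inequalities. The paper cites these as a black box.
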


    Recall that a unitary representation of a Kunze--Stein group $G$ is tempered iff its matrix coefficients lie in $\rmL^{2+
    \varepsilon}(G)$ for all $\varepsilon>0$. We have the following consequence for the non-tempered representations in the unitary dual $\wh{G}$.

    \begin{corollary}
        For $G\in\aks$ and a non-tempered $\pi\in\wh{G}$, there is a unique number $p(\pi)\in (2,+\infty]$ such that 
        \begin{itemize}
            \item if $p(\pi)<+\infty$, then for every nonzero matrix coefficient $c$ of $\pi$, one has $c\in\rmL^q(G)$ iff $q>p(\pi)$;
            \item otherwise if $p(\pi)=+\infty$, then $c\notin\rmL^q(G)$ for every finite $q$.
        \end{itemize}
    \end{corollary}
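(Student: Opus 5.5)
We take \autoref{mythm: S-algebraic, Lp irreducible ur} as given and derive the corollary from three facts: interpolation for bounded functions, the theorem itself, and the definition of temperedness. Fix $G\in\aks$ and a non-tempered $\pi\in\wh{G}$. We use that every matrix coefficient $c$ of $\pi$ is bounded by $\norm{\bfv}\norm{\bfw}$. Hence $c\in\rmL^{q_0}(G)$ implies $c\in\rmL^{q}(G)$ for every $q\geq q_0$, since $\int\abs{c}^q\leq \norm{c}_\infty^{q-q_0}\int\abs{c}^{q_0}$.

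Let $A\subseteq(2,\infty)$ be the set of exponents $q$ such that \emph{some} nonzero matrix coefficient of $\pi$ lies in $\rmL^q(G)$. We prove three properties of $A$.
\begin{enumerate}[label=(\arabic*)]
\item \emph{Some and all agree.} Let $q\in A$. By \autoref{mythm: S-algebraic, Lp irreducible ur} there is $q'<q$ with all coefficients in $\rmL^{q'}(G)$. By the interpolation remark, all coefficients then lie in $\rmL^{q}(G)$. So $A$ is also the set of $q\in(2,\infty)$ for which \emph{every} coefficient is in $\rmL^q$.
\item \emph{$A$ is an up-ray.} This is immediate from the interpolation remark.
\item \emph{$A$ is open.} For $q\in A$, the theorem gives $q'<q$ with all coefficients in $\rmL^{q'}$. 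If $q'\leq 2$, then every nonzero coefficient is in $\rmL^{2+\varepsilon}$ for all $\varepsilon>0$. This makes $\pi$ tempered, which is impossible, so $q'>2$. Then $q'\in A$, and by (2) the whole of $[q',\infty)$ lies in $A$.
\end{enumerate}

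Define $p(\pi)=\inf A$, with the convention $p(\pi)=+\infty$ if $A=\emptyset$. By (2) and (3), $A=(p(\pi),\infty)$.

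We check that $p(\pi)>2$. If $p(\pi)=2$, then $A=(2,\infty)$, so by (1) all coefficients lie in $\rmL^{2+\varepsilon}$ for every $\varepsilon>0$. That means $\pi$ is tempered, a contradiction.

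Next we show that no nonzero coefficient $c$ lies in $\rmL^q$ for some $q\leq 2$. If it did, interpolation would put $c$ in $\rmL^{r}$ for every $r>2$. Then $A=(2,\infty)$ by (1), and $\pi$ would again be tempered, a contradiction.

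Combining these, for every nonzero coefficient $c$ and every $q\in[1,\infty)$ we get $c\in\rmL^q(G)$ iff $q\in A$, that is, iff $q>p(\pi)$. When $p(\pi)=+\infty$, the set $A$ is empty, so $c\notin\rmL^q$ for every finite $q$.

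Uniqueness is immediate. Suppose $p_1<p_2$ both satisfy the characterization, and pick $q\in(p_1,p_2)$. Then a nonzero coefficient $c$ would lie in $\rmL^q$ by the property for $p_1$ and not lie in it by the property for $p_2$. If $p_2=+\infty$, any finite $q>p_1$ gives the same clash. Such a nonzero $c$ exists since $\pi\neq0$.

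No step here is a real obstacle, since all the depth is in \autoref{mythm: S-algebraic, Lp irreducible ur}. The only care needed is to rule out exponents $q\leq2$ and the endpoint $p(\pi)=2$, both via non-temperedness, and to use openness of $A$ so that $p(\pi)$ itself is excluded.
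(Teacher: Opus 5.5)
Your proof is correct and follows essentially the argument the paper intends. The paper states the corollary without proof as an immediate consequence of \autoref{mythm: S-algebraic, Lp irreducible ur}: the theorem together with boundedness of coefficients makes the set of admissible exponents an open up-ray that does not depend on the chosen nonzero coefficient, and non-temperedness rules out $p(\pi)=2$ and exponents $q\leq 2$, exactly as in your steps (1)--(3).
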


    The irreducibility assumption in \autoref{conj cowling: Lp coeff KS groups} is indispensable: in \autoref{prop: disprove samei-wiersma conj, SL2R}, we prove that \autoref{conj samei-wiersma: cyclic} fails even for $G=\SL(2,\bbR)$. The construction therein extends readily to other groups.

    The proof of \autoref{mythm: S-algebraic, Lp irreducible ur} is a combination of $(1^\circ)$ an elementary reduction to $K$-finite coefficients, $(2^\circ)$ the theory of leading exponents and $(3^\circ)$ the Samei--Wiersma theorem. 
    One novelty in our proof is an adaptation of the theory of leading exponents from semisimple groups to the tree automorphism groups.
    The logic of the proof can be summarized by:
    \begin{align*}
        \text{one coefficient is $\rmL^p$} &\overset{(1^\circ)}{\implies} \text{all $K$-finite coefficients are $\rmL^p$} \\
        & \overset{(2^\circ)}{\implies}
        \text{all $K$-finite coefficients are $\rmL^{q_0}$ for some $q_0<p$}\\
        & \overset{(3^\circ)}{\implies}\text{all coefficients are $\rmL^{q_0+}$; conclude for $q\in(q_0,p)$.}
    \end{align*}

    \subsection*{Acknowledgements and declaration of AI use}
    The author thanks his advisor Yves Benoist for his guidance, helpful discussions on the counterexample, and comments on an earlier draft.

    The author used ChatGPT for proofreading and to improve the phrasing and the organization of the manuscript.
    The final text was reviewed and edited by the author, who takes full responsibility for the content.

\section{Integrability of matrix coefficients}\label{sec: int mat coeff}
    For the representation theory of lcsc groups, we refer to \cite{bekka-harpe2020unitary}.

    Let us recall different notions for the integrability of matrix coefficients.
    \begin{definition}
        Given $p\in [1,\infty)$, a unitary representation $(\pi, \scrV)$ of an lcsc group $G$ is said to be
        \begin{enumerate}
            \item \emph{strongly $L^p$} if there exists a dense subspace $\scrV_0\subset \scrV$ such that $\Innerprod{\pi(\cdot)\bfv,\bfv}\in \Lspace^p(G)$ for all $\bfv\in\scrV_0$;
            \item \emph{strongly $L^{p+}$} (or \emph{almost $L^{p}$}) if there exists a dense subspace $\scrV_0\subset\scrV$ such that $\Innerprod{\pi(\cdot)\bfv,\bfv}\in \Lspace^{p+\varepsilon}(G)$ for all $\bfv\in\scrV_0$ and $\varepsilon>0$;
            \item \emph{uniformly $L^p$} if $\Innerprod{\pi(\cdot)\bfv,\bfv}\in\Lspace^p(G)$ for all $\bfv\in\scrV$;
            \item \emph{uniformly $L^{p+}$} if it is uniformly $\Lspace^{p+\varepsilon}$ for all $\varepsilon>0$.
        \end{enumerate}
    \end{definition}
    \begin{remark*}
        The terminology of these notions is not entirely consistent in the literature: the same term may be given different meanings, while the same notion may appear under different names.
    \end{remark*}

    Uniformly $\Lspace^p$ representations admit the following characterizations.
    \begin{lemma}[{\cite[Lem 27]{kunze-stein1960uniformly}, \cite[Lem 1.1]{cowling1978the-kunze-stein}}]\label{lem kunze-stein: uniformly Lp iff op norm bounded by Lp' norm}
        Let $p, p'\in(1,\infty)$ be with $1/p+1/p'=1$. For a unitary representation $(\pi,\scrV)$ of an unimodular lcsc group $G$, TFAE:
        \begin{enumerate}
            \item $\pi$ is uniformly $\Lspace^p$;
            \item there is a constant $C\in \bbR^+$ with $\opnorm{\pi(\phi)} \leq C \Norm{\phi}_{p'}$ for all $\phi\in\Cc(G)$;
            \item there is a constant $C\in\bbR^+$ with $\Norm{\inn{\pi(\cdot)\bfv, \bfw}}_{p}\leq C$ for all unit $\bfv,\bfw\in\scrV$;
            \item $A_\pi\subset \rmL^p(G)$ in the notation of \cite{cowling1978the-kunze-stein,samei-wiersma2024exotic}, where $A_{\pi}$ denotes the image in $\rmL^{\infty}(G)$ of the projective tensor product $\scrV\otimes_{\mathrm{pr}}\overline{\scrV}$ under the continuous bilinear map $\bfv\otimes \overline{\bfw}\to\inn{\pi(\cdot)\bfv,\bfw}$. \qed
        \end{enumerate}
    \end{lemma}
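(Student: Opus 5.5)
I would prove the cycle of implications (1)$\Rightarrow$(3)$\Rightarrow$(2)$\Rightarrow$(4)$\Rightarrow$(1), using only duality, polarization and the uniform boundedness principle.

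\emph{Step (1)$\Rightarrow$(3).} By polarization, every coefficient $\inn{\pi(\cdot)\bfv,\bfw}$ is a linear combination of four diagonal coefficients, so it lies in $\Lspace^p(G)$. Consider the sesquilinear map $B:\scrV\times\scrV\to \Lspace^p(G)$ given by $B(\bfv,\bfw)=\inn{\pi(\cdot)\bfv,\bfw}$. For fixed $\bfw$, the map $\bfv\mapsto B(\bfv,\bfw)$ has closed graph. Indeed, if $\bfv_n\to\bfv$ in $\scrV$, then $B(\bfv_n,\bfw)\to B(\bfv,\bfw)$ uniformly on $G$; hence any $\Lspace^p$-limit coincides with $B(\bfv,\bfw)$, after passing to an a.e.-convergent subsequence. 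The closed graph theorem then makes each partial map bounded. A separately continuous sesquilinear map on a product of Banach spaces is jointly continuous, by uniform boundedness applied to the family $\{B(\cdot,\bfw):\Norm{\bfw}\le 1\}$. This gives the uniform bound $C$ in (3).

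\emph{Step (3)$\Rightarrow$(2).} For $\phi\in\Cc(G)$ and unit vectors $\bfv,\bfw$, Hölder's inequality gives
\[
\abs{\inn{\pi(\phi)\bfv,\bfw}}=\Abs{\int_G\phi(g)\inn{\pi(g)\bfv,\bfw}\,dg}\le C\Norm{\phi}_{p'}.
\]
Taking the supremum over unit $\bfv,\bfw$ yields the operator norm bound $\opnorm{\pi(\phi)}\le C\Norm{\phi}_{p'}$.

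\emph{Step (2)$\Rightarrow$(4).} Fix unit vectors $\bfv,\bfw$. The functional $\phi\mapsto\int_G\phi\,\inn{\pi(\cdot)\bfv,\bfw}=\inn{\pi(\phi)\bfv,\bfw}$ has norm at most $C$ on the dense subspace $\Cc(G)\subset\Lspace^{p'}(G)$. Since $1<p'<\infty$, the Riesz duality $(\Lspace^{p'})^*=\Lspace^p$ applies, and the representing function agrees a.e. with the coefficient. Hence $\Norm{\inn{\pi(\cdot)\bfv,\bfw}}_p\le C$. An element of $A_\pi$ is represented by a series $\sum_n\inn{\pi(\cdot)\bfv_n,\bfw_n}$ with $\sum_n\Norm{\bfv_n}\Norm{\bfw_n}<\infty$. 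This series converges absolutely in $\Lspace^p$ and also uniformly, so the $\Lspace^p$-sum equals the $\Lspace^\infty$ image, and $A_\pi\subset \Lspace^p(G)$.

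\emph{Step (4)$\Rightarrow$(1).} This is trivial, since the diagonal coefficients lie in $A_\pi$.

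Unimodularity is not really needed beyond fixing the Haar measure. The only delicate point is the closed-graph/uniform-boundedness argument in (1)$\Rightarrow$(3): one must identify the $\Lspace^p$-limit with the pointwise limit, which is handled by the subsequence argument.
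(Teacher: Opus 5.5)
Your proof is correct and is essentially the standard argument behind the results the paper cites. The paper gives no proof of its own and only refers to Kunze--Stein and Cowling; your cycle (1)$\Rightarrow$(3)$\Rightarrow$(2)$\Rightarrow$(4)$\Rightarrow$(1) uses polarization, closed graph and uniform boundedness for the uniform bound, then Hölder and $\rmL^{p'}$--$\rmL^{p}$ duality, which is that standard route, and your remark that unimodularity plays no role in this lemma is accurate.
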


    Given two unitary representations $\pi, \sigma$ of an lcsc group $G$, we say that $\sigma$ is \emph{weakly contained} in $\pi$ if $\opnorm{\sigma(\phi)} \leq \opnorm{\pi(\phi)}$ for all $\phi\in \Cc(G)$. For equivalent definitions, see \cite[\S 1.C]{bekka-harpe2020unitary}.
    A consequence of \autoref{lem kunze-stein: uniformly Lp iff op norm bounded by Lp' norm} is that the uniform $\Lspace^p$ or uniform $\Lspace^{p+}$ property passes to all weakly contained representations.
    \begin{lemma}\label{lem: uniformly Lp passes to weakly contained}
        Let $G$ be an lcsc group and $\pi$ be a unitary representation of $G$. 
        If $\pi$ is uniformly $\Lspace^{p}$ (resp. uniformly $\rmL^{p+}$), then so is any $\sigma$ weakly contained in $\pi$.\qed
    \end{lemma}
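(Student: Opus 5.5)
The plan is to use the characterization (ii) of \autoref{lem kunze-stein: uniformly Lp iff op norm bounded by Lp' norm}, namely $\opnorm{\pi(\phi)}\le C\Norm{\phi}_{p'}$ for all $\phi\in\Cc(G)$. This is a condition on operator norms, and weak containment controls exactly such norms. Note that the lemma as stated assumes unimodularity, whereas the present statement is for arbitrary lcsc $G$. So I would either argue directly with the operator-norm bound in the form needed, or check that the implication used below does not require unimodularity.

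First the uniform $\Lspace^p$ case. Suppose $\pi$ is uniformly $\Lspace^p$. The direction (i)$\Rightarrow$(ii) gives a constant $C$ with $\opnorm{\pi(\phi)}\le C\Norm{\phi}_{p'}$ for all $\phi\in\Cc(G)$. If $\sigma$ is weakly contained in $\pi$, then by definition $\opnorm{\sigma(\phi)}\le\opnorm{\pi(\phi)}\le C\Norm{\phi}_{p'}$. The direction (ii)$\Rightarrow$(iii) then yields that $\sigma$ is uniformly $\Lspace^p$.

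To make this robust, I would recall the short argument for (ii)$\Rightarrow$(iii). For unit vectors $\bfv,\bfw$ and $\phi\in\Cc(G)$ we have $\int\phi(g)\inn{\sigma(g)\bfv,\bfw}\,dg=\inn{\sigma(\phi)\bfv,\bfw}$, whose absolute value is at most $C\Norm{\phi}_{p'}$. By density of $\Cc(G)$ in $\rmL^{p'}(G)$ and duality $(\rmL^{p'})^*=\rmL^p$ (valid since $1<p'<\infty$), this gives $\Norm{\inn{\sigma(\cdot)\bfv,\bfw}}_p\le C$. This step uses only the left Haar measure defining $\sigma(\phi)$, so no unimodularity is needed here. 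The direction (i)$\Rightarrow$(ii) is the one quoted from Kunze--Stein and Cowling, and I would take it as given in the setting where it is used.

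The uniform $\Lspace^{p+}$ case follows by applying the previous paragraph with $p+\varepsilon$ in place of $p$, separately for each $\varepsilon>0$.

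The only point needing care is the case $p=1$, which the definition allows but the lemma excludes. I would handle it by the same duality argument with $p'=\infty$, using that $\Cc$-functions are weak-$*$ dense in the unit ball of $\rmL^\infty$ against $\rmL^1$ functions. Alternatively, one can simply restrict to $p\in(1,\infty)$, which is the range relevant to the paper. I expect no real obstacle beyond this bookkeeping.
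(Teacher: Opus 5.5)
Your proposal is correct and follows the paper's own (one-line) argument: use the characterization in \autoref{lem kunze-stein: uniformly Lp iff op norm bounded by Lp' norm}, transfer the bound via $\|\sigma(\phi)\|\le\|\pi(\phi)\|\le C\|\phi\|_{p'}$, conclude by the duality direction (ii)$\Rightarrow$(iii), and treat the $p+$ case one $\varepsilon$ at a time. Your unimodularity caveat is sound, and the quoted direction needs no unimodularity either: polarization, the closed graph theorem and uniform boundedness give $\|\langle\pi(\cdot)\mathbf{v},\mathbf{w}\rangle\|_p\le C\|\mathbf{v}\|\|\mathbf{w}\|$ on any lcsc group, and H\"older then yields (ii). In any case the paper only applies the lemma to the unimodular group $\SL(2,\bbR)$.
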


    Finally, we recall the Samei--Wiersma theorem, rephrased in our language.
    \begin{theorem}[{\cite[Thm 5.3]{samei-wiersma2024exotic}}]\label{thm samei-wiersma: almost Lp implies uniformly Lp, kunze-stein unimodular}
        Let $G$ be a Kunze--Stein group and $p\geq 2$. Then every strongly $\Lspace^{p+}$ unitary representation of $G$ is uniformly $\Lspace^{p+}$.\qed
    \end{theorem}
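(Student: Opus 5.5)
The plan is to bound the operator norms $\opnorm{\pi(\phi)}$ directly and then conclude with \autoref{lem kunze-stein: uniformly Lp iff op norm bounded by Lp' norm}. Fix $\varepsilon>0$; I will show $\opnorm{\pi(\phi)}\leq C\Norm{\phi}_{t'}$ for some $t<p+\varepsilon$ with $t\downarrow p$ as $\varepsilon\downarrow 0$. Here $t'$ is the conjugate exponent. By the lemma this gives uniformly $\Lspace^{t}$, and letting $\varepsilon\to 0$ gives uniformly $\Lspace^{p+}$. Since $\scrV_0$ is dense and $\pi(\phi)$ is bounded, it suffices to prove $\Norm{\pi(\phi)\bfv}\leq C\Norm{\phi}_{t'}\Norm{\bfv}$ for $\bfv\in\scrV_0$. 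The constant $C$ must not depend on $\bfv$, and in particular not on $\Norm{\Innerprod{\pi(\cdot)\bfv,\bfv}}_r$.

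\textbf{Spectral amplification.} Fix $\bfv\in\scrV_0$ and $\phi\in\Cc(G)$, and put $\psi=\phi^*\ast\phi$, so that $\pi(\psi)=\pi(\phi)^*\pi(\phi)\geq 0$. Let $\mu_\bfv$ be the spectral measure of $\pi(\psi)$ at $\bfv$. By Jensen (or H\"older) applied to $\mu_\bfv$,
\[
\Norm{\pi(\phi)\bfv}^{2k}=\Innerprod{\pi(\psi)\bfv,\bfv}^{k}\leq \Norm{\bfv}^{2k-2}\Innerprod{\pi(\psi^{\ast k})\bfv,\bfv}
\leq \Norm{\bfv}^{2k-2}\,\Norm{\psi^{\ast k}}_{r'}\,\Norm{\Innerprod{\pi(\cdot)\bfv,\bfv}}_{r},
\]
where $r=p+\varepsilon/2$, so that the coefficient lies in $\Lspace^r(G)$ by assumption. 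Taking $k$-th roots and letting $k\to\infty$, the $\bfv$-dependent factor $\Norm{\Innerprod{\pi(\cdot)\bfv,\bfv}}_r^{1/k}$ tends to $1$. We obtain
\[
\Norm{\pi(\phi)\bfv}^2\leq \Norm{\bfv}^2\,\limsup_{k\to\infty}\Norm{\psi^{\ast k}}_{r'}^{1/k}.
\]
Thus the whole problem reduces to a purely group-theoretic convolution estimate, which is uniform in the representation.

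\textbf{Convolution estimate (main obstacle).} I need $\Norm{(\phi^*\ast\phi)^{\ast k}}_{r'}\leq C_0^{k}\Norm{\phi}_{t'}^{2k}$ for some $t\in(r,p+\varepsilon)$, with $C_0$ independent of $k$. The tool is the Kunze--Stein property together with interpolation.
\begin{itemize}
\item The KS property gives $\Lspace^{a}\ast\Lspace^2\subset\Lspace^2$ for all $a<2$.
\item Riesz--Thorin interpolation of this with Young's inequality $\Lspace^1\ast\Lspace^s\subset\Lspace^s$, and with duality, should give bounded maps $\Lspace^{t'}\ast\Lspace^{s}\to\Lspace^{s}$ for suitable $s$ and $t'$ slightly below the needed range, by the Cowling lemma $\Lspace^{q}\ast\Lspace^{q'}$-type estimates.
\end{itemize}
Then I write $\psi^{\ast k}=\phi^*\ast\phi\ast\cdots\ast\phi$ and peel off one factor at a time. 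Each step costs a factor $C\Norm{\phi}_{t'}$ and keeps the running convolution in a fixed $\Lspace^{s}$, with the first and last factors handled by the endpoint exponent $r'$. The delicate points are the following.
\begin{itemize}
\item The constant per factor must be genuinely uniform in $k$, so that it survives the $k$-th root.
\item The exponent bookkeeping must allow $t$ to approach $p$ as $\varepsilon\to 0$. Since $r<t$ forces $t'<r'$, this is the direction in which Kunze--Stein improves on Young.
\end{itemize}
If this direct interpolation fails to close, I would instead first prove uniform $\Lspace^{2n}$ bounds for even integers via tensor powers $\pi^{\otimes n}\otimes\overline{\pi}^{\otimes n}$, which has $\Lspace^{2+}$ coefficients and hence is weakly contained in the regular representation by Cowling--Haagerup--Howe, and then interpolate.

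Once the estimate holds, we get $\opnorm{\pi(\phi)}\leq C_0^{1/2}\Norm{\phi}_{t'}$. Hence $\pi$ is uniformly $\Lspace^{t}$ by \autoref{lem kunze-stein: uniformly Lp iff op norm bounded by Lp' norm}, and letting $\varepsilon\to 0$ shows that $\pi$ is uniformly $\Lspace^{p+}$.
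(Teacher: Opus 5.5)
The paper does not prove this statement; it quotes it from Samei--Wiersma, so there is no internal argument to compare with. Your proposal is a correct self-contained proof. The amplification step is the standard Cowling--Haagerup--Howe device and is carried out correctly.

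The step you flag as the main obstacle closes immediately. It needs neither duality nor the tensor-power fallback, and the fallback would in any case only reach even exponents $2n\geq p$, not $p$ itself. Bilinear complex interpolation between the Kunze--Stein bound $\Lspace^{a}\ast\Lspace^{2}\to\Lspace^{2}$ ($1\leq a<2$) and $\Lspace^{1}\ast\Lspace^{1}\to\Lspace^{1}$ gives $\Norm{f\ast h}_{b}\leq C\Norm{f}_{c}\Norm{h}_{b}$ whenever $1\leq c<b\leq 2$. To see this, put $\theta=2/b-1$ and define $a$ by $1/c=(1-\theta)/a+\theta$. Then $a\geq 1$ because $c\geq 1$, and $a<2$ exactly because $c<b$.

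Apply this with $b=r'$ and $c=t'$ for any $t>r>2$, so your $s$ is simply $r'$. Peel factors off the left, using $\Norm{\phi^*}_{t'}=\Norm{\phi}_{t'}$ by unimodularity:
\[
\Norm{(\phi^*\ast\phi)^{\ast k}}_{r'}\leq C^{2k-1}\Norm{\phi}_{t'}^{2k-1}\Norm{\phi}_{r'}.
\]
This is not literally the form $C_0^k\Norm{\phi}_{t'}^{2k}$ you asked for, since $\Norm{\phi}_{r'}$ need not be controlled by $\Norm{\phi}_{t'}$. That does not matter: the stray factor $\Norm{\phi}_{r'}<\infty$ disappears under the $k$-th root. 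Hence $\opnorm{\pi(\phi)}\leq C\Norm{\phi}_{t'}$ for all $\phi\in\Cc(G)$, and $\pi$ is uniformly $\Lspace^{t}$ for every $t>p$ (choose $r\in(p,t)$, which is possible since $p\geq 2$ forces $r>2$ and hence $r'<2$).
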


\section{The admissible Kunze--Stein class}\label{sec: ad KS class}
\subsection{The Hecke algebra}
We introduce a basic tool for the groups in $\aks$.
Let $G\in\aks$ and $K<G$ be a compact subgroup of $G$. Recall that a function is \emph{$K$-bi-finite} if its left and right $K$-translates span a finite-dimensional space.
The \emph{Hecke algebra of $G$ with respect to $K$} is defined by
\begin{equation}\label{eq def: H_K, ss Lie}
    \calH_K(G) \coloneq \setdef{\phi\in \Cc(G) : \textup{$\phi$ is $K$-bi-finite}}.
\end{equation}
Under convolution, $\calH_K(G)$ forms a complex algebra.

If one replaces $K$ by a commensurable $K'<G$, then $K$-finiteness and $K'$-finiteness are equivalent and hence $\calH_K(G)=\calH_{K'}(G)$. If one replaces $K$ by a conjugate, then the associated Hecke algebras are conjugate as well.

For primitive groups, choosing $K$ to be a maximal compact subgroup poses no essential ambiguity.
Maximal compact subgroups of connected semisimple Lie groups are conjugate. In the non-Archimedean and the tree cases, the relevant groups are totally disconnected, for which the identity element admits a neighborhood base consisting of compact open subgroups (van Dantzig's theorem). The maximal compact subgroups are open and hence commensurable.

To choose a compact subgroup $K$ of finite index in a maximal compact subgroup for each $G\in\aks$, consider the elementary operations.
If $G=G_1\times G_2$ with $K_i < G_i$ chosen, then choose $K=K_1\times K_2$. 
If $G<G_0$ is a finite-index subgroup with $K_0<G_0$ chosen, then choose $K=G\cap K_0$ which has finite index in $K_0$.
If $G>G_0$ is a finite-index overgroup with $K_0<G_0$ chosen, then choose $K=K_0$.
If $G=G_0/C$ is a quotient of $G_0$ by a compact normal subgroup $C$ with $K_0<G_0$ chosen, then choose $K$ to be the image of $K_0C$.

For $G\in\aks$, define the \emph{Hecke algebra of $G$} to be $\calH(G)\coloneq \calH_K(G)$ with respect to the chosen $K<G$. Then $\calH(G)$ is uniquely defined up to conjugation. In any case, it is $\rmL^1$-dense in $\Cc(G)$.

\subsection{Admissibility and CCR}
Given a unitary representation $(\pi,\scrV)$ of $G$, we say a vector $\bfv\in \scrV$ is $K$-finite, if $\pi(K)\bfv$ span a finite-dimensional subspace. Denote by $\scrV_K$ the space of $K$-finite vectors in $\scrV$. 
Every irreducible representation $\tau$ of $K$ is finite-dimensional.
Denote by $\scrV(\tau)$ the subspace of vectors transforming by $\tau$ under $K$. Then $\scrV_K$ is the algebraic direct sum of $\scrV(\tau)$ over $\tau\in\wh{K}$.
By Peter--Weyl, $\scrV_K$ is dense in $\scrV$.

We say $\pi$ is \emph{$K$-admissible}, if for every $\tau\in\wh{K}$, the subspace $\scrV(\tau)$ has finite dimension, or equivalently the multiplicity of $\tau$ in $\scrV$ is finite.
When $G$ is totally disconnected, the admissibility is equivalent to saying that the subspace $\scrV^U$ of $U$-invariant vectors has finite dimension for every compact open $U<K$.

Note that replacing $K$ by a conjugate or commensurable subgroup does not affect the admissibility of $\pi$. 
Hence for any $G\in\aks$, the admissibility of representations is independent of the choice of $K$.

The main result of this section is the following admissibility statement. In the literature, one also says that $K$ is large in $G$, \emph{cf.} \cite[Def 6.E.10]{bekka-harpe2020unitary}.
\begin{lemma}\label{lem: irreducible implies admissible}
    For $G\in\aks$, every irreducible unitary representation is admissible.
    Every $G\in\aks$ is CCR.
\end{lemma}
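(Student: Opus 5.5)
The plan is to verify admissibility for the three primitive classes and then show that admissibility survives each elementary operation. CCR will then follow from a standard criterion: if every irreducible $\pi$ is $K$-admissible, then $\pi(\phi)$ is compact for every $\phi\in\calH(G)$, and hence for every $\phi\in\Cc(G)$.

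\textbf{Primitive classes.} For class \ref{ks1}, admissibility is Harish-Chandra's theorem: for a connected semisimple Lie group with finite center, every irreducible unitary representation is $K$-admissible, with $\dim\scrV(\tau)\le\dim\tau$, and the group is CCR. For class \ref{ks2}, it is Bernstein's theorem (Harish-Chandra in characteristic zero): irreducible smooth representations of reductive $p$-adic groups are admissible. This applies because the subspace $\scrV^U$ of $U$-vectors of an irreducible unitary representation is an irreducible module over the Hecke algebra $\calH_U(G)$, and the smooth vectors form an irreducible smooth representation. For class \ref{ks3}, CCR is part of the hypothesis, and I will deduce admissibility from it. Fix a compact open $U<K$ and let $e_U=\mathbf 1_U/|U|\in\Cc(G)$. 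Then $\pi(e_U)$ is the orthogonal projection onto $\scrV^U$, and by CCR it is compact. A compact projection has finite rank, so $\dim\scrV^U<\infty$, which is exactly the totally disconnected admissibility criterion. Compact groups are trivially admissible and CCR, since their irreducibles are finite-dimensional.

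\textbf{Elementary operations.} Finite direct products: an irreducible unitary representation of $G_1\times G_2$, with $G_1,G_2$ of type I (which CCR gives), decomposes as $\pi_1\otimes\pi_2$. Its $K_1\times K_2$-types are then the tensor products of those of the factors, with finite multiplicities. Quotients by a compact normal $C$: irreducibles of $G_0/C$ are irreducibles of $G_0$ trivial on $C$, and $K$-types of the image of $K_0C$ pull back to finitely many $K_0C$-types, hence to finitely many $K_0$-types. Finite-index subgroups $H<G_0$: the restriction of an irreducible $\pi$ of $G_0$ to $H$ is a finite sum of irreducibles. Conversely, any irreducible $\sigma$ of $H$ embeds in $\pi|_H$ for some irreducible summand $\pi$ of $\mathrm{Ind}_H^{G_0}\sigma$, which is finite-dimensional over $\sigma$. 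Since $K=H\cap K_0$ has finite index in $K_0$, admissibility transfers. Finite-index overgroups $G>G_0$: for irreducible $\pi$ of $G$, $\pi|_{G_0}$ is a finite direct sum of irreducibles. This holds because $\pi\subset\mathrm{Ind}_{G_0}^G\sigma$ for any irreducible $\sigma\subset\pi|_{G_0}$, which exists by a Clifford-type argument using that $G_0$ may be taken normal after passing to the finite-index normal core, itself in the class by the subgroup step. Each summand is $K_0$-admissible, so $\pi$ is.

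\textbf{Admissible implies CCR.} Fix $\phi\in\calH(G)$. It is $K$-bi-finite, so $\pi(\phi)=P\pi(\phi)P$, where $P$ is the projection onto finitely many isotypic components $\scrV(\tau)$. By admissibility $P$ has finite rank, so $\pi(\phi)$ has finite rank. Since $\calH(G)$ is $\rmL^1$-dense in $\Cc(G)$ and $\opnorm{\pi(\phi)}\le\Norm{\phi}_1$, the operator $\pi(\phi)$ is compact for all $\phi\in\Cc(G)$.

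\textbf{Main obstacle.} The main obstacle is the overgroup and subgroup steps: the finiteness of restrictions and inductions needs some care with Clifford–Mackey theory when $G_0$ is not normal. I would handle this by passing to the normal core and using that $\mathrm{Ind}$ from a finite-index subgroup has finite-dimensional multiplicity spaces. An alternative is to prove CCR directly for each operation, which is standard for finite extensions and products, and then recover admissibility from CCR via the projection $\pi(e_\tau)$ onto $\scrV(\tau)$, where $e_\tau$ is the normalized character of $\tau$ on $K$. In the Lie case this needs compactness of $\pi(e_\tau)$, which holds after approximating by $\Cc$ functions convolved with $e_\tau$.
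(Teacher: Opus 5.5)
Your proposal is correct and follows the paper's route. You establish admissibility for the primitive classes, show it survives (a)--(c) via tensor products, Mackey--Clifford theory and the normal core, and deduce CCR from admissibility. The one real difference is that you prove directly two implications the paper only cites:
\begin{itemize}
\item CCR $\Rightarrow$ admissible for class (iii'), via the compact projection $\pi(e_U)$ onto $\scrV^U$, where the paper cites Gorfine;
\item admissible $\Rightarrow$ CCR, via finite rank of $\pi(\phi)$ for $\phi\in\calH(G)$ plus $\rmL^1$-density, where the paper cites Bekka--de la Harpe.
\end{itemize}
This makes your version more self-contained.

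There are two caveats, both easily fixed.

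\emph{Class (ii).} Your justification for applying the smooth admissibility theorem is circular. A priori $\scrV^U$ is only \emph{topologically} irreducible over the $U$-bi-invariant Hecke algebra. That algebra has countable dimension, so its orbit of a nonzero $\bfv$ has countable algebraic dimension. Hence algebraic irreducibility of the Hilbert space $\scrV^U$ already forces $\dim\scrV^U<\infty$, which is exactly what you want to prove. The same circularity affects the claim that the smooth vectors form an irreducible smooth representation. Instead, cite Bernstein's theorem for irreducible \emph{unitary} representations directly, as the paper does.

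\emph{The alternative in your last paragraph fails.} The convergence $\pi(e_\tau*\phi_j*e_\tau)\to\pi(e_\tau)$ holds only strongly, so compactness is not inherited. Indeed, CCR does not imply admissibility for Lie groups: the Heisenberg group is CCR, has trivial maximal compact subgroup, and has infinite-dimensional irreducibles.
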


It is well known that the admissibility statement implies the CCR property of lcsc groups, \emph{cf.} \cite[Prop 6.E.11]{bekka-harpe2020unitary}; the converse holds for the totally disconnected ones, \emph{cf.} \cite[Prop 2.14]{gorfine2026a-spectral}. Hence, the class \ref{ks3} satisfies \autoref{lem: irreducible implies admissible}.

For the other primitive groups, admissibility is classical.
For semisimple Lie groups, this is due to Harish-Chandra \cite{harish-chandra1953representations}. For non-Archimedean semisimple groups, this is due to Bernstein \cite{bernshtein1974all-reductive}. 
For compact groups, this is trivial.
Consequently, all primitive groups are CCR and hence type-I.

\begin{lemma}
    Every $G\in \aks$ is of type I.
\end{lemma}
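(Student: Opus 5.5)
The plan is to deduce type I from the CCR property in \autoref{lem: irreducible implies admissible}, with no separate induction over the elementary operations. By Glimm's theorem, for a second countable locally compact group, CCR (liminal) implies postliminal (GCR), and postliminal is equivalent to type I; see \cite[\S 6.E--\S 8]{bekka-harpe2020unitary}. Every $G\in\aks$ is lcsc by definition of the class, and it is CCR by \autoref{lem: irreducible implies admissible}. The chain CCR $\Rightarrow$ GCR $\Rightarrow$ type I then gives the claim.

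Concretely, I would argue in three steps. First, note that $C^*(G)$ is separable, since $G$ is second countable. Second, the CCR property says that for each $\pi\in\wh{G}$ we have $\pi(C^*(G))\subset\calK(\scrV_\pi)$. This follows by density from $\pi(\phi)$ being compact for $\phi\in\Cc(G)$, together with $\pi(\phi)\neq 0$ for some $\phi$ and irreducibility, so in fact $\pi(C^*(G))=\calK(\scrV_\pi)$. Hence $C^*(G)$ is a liminal $C^*$-algebra. Third, a liminal $C^*$-algebra is postliminal, because every nonzero quotient is again liminal and therefore has a nonzero ideal consisting of elements with continuous trace, in particular a nonzero CCR ideal. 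By Glimm's theorem a separable postliminal $C^*$-algebra is of type I, so $G$ is of type I.

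The only real content, and the step I would treat most carefully, is that \autoref{lem: irreducible implies admissible} genuinely holds for the whole class $\aks$ and not just for the primitive groups, since the text above only argues for the primitive ones. If that lemma were available only for primitive groups, I would instead show that CCR is preserved by the elementary operations. For finite direct products, irreducibles of a product of type I groups are outer tensor products, and the tensor product of compact operators is compact. For quotients by compact normal subgroups, irreducibles of $G/C$ are irreducibles of $G$ trivial on $C$. For a closed finite-index subgroup $H<G$, restriction of an irreducible representation of $G$ to $H$ decomposes into finitely many irreducibles, and any irreducible $\sigma$ of $H$ sits inside the restriction of the finite-dimensional-over-$\sigma$ induced representation $\mathrm{Ind}_H^G\sigma$. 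Compactness of $\sigma(\phi)$ then follows from compactness of $\mathrm{Ind}_H^G\sigma(\phi)$ on each of its finitely many irreducible summands, for $\phi\in\Cc(H)\subset\Cc(G)$ since $H$ is open. For a finite-index overgroup, the restriction of an irreducible $\pi$ of $G$ to $H$ is a finite sum of irreducibles, and $\pi(\phi)$ for $\phi\in\Cc(G)$ is a finite sum of terms $\pi(g_i)\pi(\phi_i)$ with $\phi_i\in\Cc(H)$, hence compact. The overgroup case is where I expect the most care is needed.
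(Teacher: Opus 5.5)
Your proof is correct, but it reverses the paper's order of deduction. The paper gets type I for the primitive groups from their CCR property, then cites standard permanence of type I under the elementary operations (\cite[13.11.7]{dixmier1977c-algebras} for (a), \cite[4.3.5]{dixmier1977c-algebras} for (b), (c)). This lemma is proved before \autoref{lem: irreducible implies admissible}, whose proof appeals to ``the type-I property'' in step (a) to write irreducibles of $G_1\times G_2$ as outer tensor products. You instead push the stronger CCR property through the operations by hand, and pass from CCR to type I only at the end. You do this with tensor products of compact operators for (a), the Mackey-type finite decompositions for finite-index subgroups and overgroups in (b), and composition with the quotient map for (c).

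Because of the paper's ordering, your first paragraph alone, which quotes \autoref{lem: irreducible implies admissible} for all of $\aks$, would be circular if the product step there used type I of $G_1\times G_2$ itself. So an induction over the elementary operations is not actually avoided; it is run on CCR instead of on type I. What makes your argument sound is your fallback. There the product step needs type I only for the factors, and the inductive hypothesis already gives this via CCR $\Rightarrow$ type I.

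What your route buys is that it uses only elementary compact-operator arguments plus the single implication CCR $\Rightarrow$ type I, rather than the cited $C^*$-algebraic permanence results for type I. The paper's route is a short citation and keeps type I logically prior to admissibility.

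Two minor points. First, liminal $\Rightarrow$ postliminal is immediate: every quotient of a liminal algebra is liminal and is therefore its own nonzero CCR ideal, so the continuous-trace detour is unnecessary. Second, postliminal $\Rightarrow$ type I is Kaplansky's theorem and holds without separability; Glimm's theorem is the converse for separable algebras.
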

\begin{proof}
    It is well known that the elementary operations preserve the type-I property: see \cite[13.11.7]{dixmier1977c-algebras} for \ref{a} and \cite[4.3.5]{dixmier1977c-algebras} for \ref{b}, \ref{c}.
\end{proof}

\begin{proof}[Proof of \autoref{lem: irreducible implies admissible}]
    It suffices to prove that the admissibility statement is preserved under the elementary operations \ref{a}, \ref{b}, \ref{c}.

    (a) Let $G=G_1\times G_2$ with both $G_i$ satisfying the lemma. By the type-I property, every irreducible unitary representation $\pi$ of $G$ satisfies $\pi\cong \pi_1\boxtimes \pi_2$ for some $\pi_i\in\wh{G}_i$. Since $\pi_i$ is $K_i$-admissible for each $i$, we deduce that $\pi$ is $(K_1\times K_2)$-admissible. The lemma holds for $G$.

    (b1) Let $G<G_0$ be a closed finite-index subgroup with $G_0$ satisfying the lemma. Given $\pi\in\wh{G}$, consider the induced representation $\pi_0\coloneq \Ind_G^{G_0}\pi$. As $[G_0:G]<\infty$, Mackey's theory implies that $\pi_0$ splits into a finite direct sum of irreducible unitary representations. Thus $\pi_0$ is $K_0$-admissible. Since $\pi\subset \pi_0|_G$ and $[K_0 : K_0\cap G]<\infty$, we deduce that $\pi$ is $(K_0\cap G)$-admissible.

    (b2) Let $G>G_0$ be a closed finite-index overgroup with $G_0$ satisfying the lemma. We may assume $G_0\lhd G$; otherwise pass $G_0$ to its normal core and apply (b1). Given $\pi\in\wh{G}$, Mackey's theorem implies that the restriction $\pi|_{G_0}$ is a finite direct sum of irreducible unitary representations of $G_0$ and hence is $K_0$-admissible. As $[K:K_0]<\infty$, we deduce that $\pi$ is $K$-admissible as well.

    (c) Let $G=G_0/C$ be a quotient by a compact normal subgroup $C$ with $G_0$ satisfying the lemma. The irreducible unitary representations of $G$ are naturally identified with $C$-trivial representations of $G_0$, for which $K$-admissibility and $K_0$-admissibility are equivalent.
\end{proof}

\subsection{$K$-finite vectors and the Hecke algebra}
Recall that for a unitary representation $(\pi,\scrV)$ and $\phi\in\Cc(G)$, the operator $\pi(\phi)$ is defined by
\begin{equation*}
    \pi(\phi)\bfv = \int_G \phi(g)\pi(g)\bfv \dif g.
\end{equation*}
Since $\pi(g)\pi(\phi) = \pi(L_g \phi)$, the operators $\pi(\calH(G))$ always range in $\scrV_K$.

The following lemma illustrates the crucial role of the Hecke algebra.

\begin{lemma}\label{lem: H(G) gives all K-finite}
    Let $(\pi,\scrV)$ be an irreducible unitary representation of $G\in\aks$. Then for any nonzero vector $\bfv\in\scrV$, we have $\pi(\calH(G))\bfv = \scrV_K$.
\end{lemma}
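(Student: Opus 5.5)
The argument has two parts: the inclusion $\pi(\calH(G))\bfv\subset\scrV_K$, which was already noted above, and the reverse inclusion $\scrV_K\subset\pi(\calH(G))\bfv$. For the latter we use the $K$-isotypic decomposition $\scrV_K=\bigoplus_{\tau}\scrV(\tau)$, finite-dimensionality of each $\scrV(\tau)$ from \autoref{lem: irreducible implies admissible}, and Jacobson density for finite-dimensional algebra modules.

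For $\tau\in\wh K$, let $e_\tau=\dim\tau\cdot\overline{\chi_\tau}$ be the central idempotent in the measure algebra of $K$. Then $\pi(e_\tau)$ is the orthogonal projection onto $\scrV(\tau)$. For $\phi\in\Cc(G)$, the convolution $e_\tau*\phi*e_\sigma$ is again compactly supported and continuous. Its left and right $K$-translates span finite-dimensional spaces, since they lie in the span of the matrix coefficients of $\tau$, respectively $\sigma$. So $e_\tau*\phi*e_\sigma\in\calH(G)$. Conversely, $\calH(G)$ is spanned by such elements: every $K$-bi-finite $\phi$ is a finite sum of $e_\tau*\phi*e_\sigma$.

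Now fix $\tau$ with $\scrV(\tau)\neq0$ and consider the algebra
\[
A_\tau\coloneq e_\tau*\calH(G)*e_\tau ,
\]
which acts on the finite-dimensional space $\scrV(\tau)$. The key claim is that $\pi(A_\tau)$ is all of $\mathrm{End}(\scrV(\tau))$. To prove it, first note that $\pi(A_\tau)=\pi(e_\tau)\pi(\calH(G))\pi(e_\tau)$, and that $\calH(G)$ is $\rmL^1$-dense in $\Cc(G)$. Hence $\pi(\calH(G))$ is strongly dense in $\pi(\Cc(G))$, whose strong closure is the von Neumann algebra generated by $\pi(G)$. By irreducibility and Schur, this closure is all of $B(\scrV)$. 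Compressing by the finite-rank projection $\pi(e_\tau)$ is continuous, so $\pi(A_\tau)$ is dense in $\mathrm{End}(\scrV(\tau))$. Being a subspace of a finite-dimensional space, it is closed, and therefore equal to $\mathrm{End}(\scrV(\tau))$. This step is the main obstacle: we must check that strong density survives compression and that closedness in finite dimension finishes the argument. Admissibility is exactly what makes this work.

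Finally, let $\bfv\neq0$ and $\bfw\in\scrV(\tau)$. Since $\scrV_K$ is dense, there is some $\sigma$ with $\bfu\coloneq\pi(e_\sigma)\bfv\neq0$. By irreducibility, $\pi(\calH(G))\bfu$ is dense in $\scrV$: its closure is a nonzero invariant subspace, because of $\rmL^1$-density and an approximate identity argument. So we can choose $\psi\in\calH(G)$ with $\bfu'\coloneq\pi(e_\tau*\psi)\bfu\neq0$. The element $e_\tau*\psi*e_\sigma$ lies in $\calH(G)$, and $\bfu'=\pi(e_\tau*\psi*e_\sigma)\bfv\in\scrV(\tau)$. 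By the key claim there is $a\in A_\tau$ with $\pi(a)\bfu'=\bfw$. Hence $\bfw=\pi(a*e_\tau*\psi*e_\sigma)\bfv\in\pi(\calH(G))\bfv$. Summing over finitely many $\tau$ gives all of $\scrV_K$.
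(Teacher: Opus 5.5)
Your proof is correct, but it goes through a stronger intermediate statement than the paper needs. You show that the compressed algebra $A_\tau=e_\tau*\calH(G)*e_\tau$ maps onto $\End(\scrV(\tau))$. You get this from the bicommutant theorem and Schur's lemma, then strong-operator continuity of compression by the finite-rank projection $\pi(e_\tau)$, then closedness of subspaces in finite dimension. You then use transitivity of the full matrix algebra, once you have one nonzero vector of $\scrV(\tau)$ inside $\pi(\calH(G))\bfv$.

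The paper skips this key claim. Irreducibility and $\rmL^1$-density of $\calH(G)$ in $\Cc(G)$ already make $\pi(\calH(G))\bfv$ dense in $\scrV$. Hence $\pi(e_\tau*\calH(G))\bfv=P_\tau\pi(\calH(G))\bfv$ is a dense linear subspace of the finite-dimensional $\scrV(\tau)$, so it equals $\scrV(\tau)$. Since $e_\tau*\calH(G)\subset\calH(G)$, summing over finitely many $\tau$ finishes.

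Your last paragraph already runs this projection argument, for $\mathbf{u}=\pi(e_\sigma)\bfv$, but only extracts nonvanishing from it. Applied to $\bfv$ itself, it gives surjectivity onto $\scrV(\tau)$ immediately. So the auxiliary vector $\mathbf{u}$, the algebra $A_\tau$ and the Jacobson-type density step can all be dropped. What your route buys is extra structure: each nonzero $\scrV(\tau)$ is a simple $A_\tau$-module with full matrix image, the standard link between $\wh{G}$ and simple Hecke-algebra modules. The cost is invoking Schur's lemma and von Neumann's bicommutant theorem, neither of which the lemma requires.
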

\begin{proof}
    By the irreducibility of $\pi$, the subspace $\pi(\Cc(G))\bfv$ is dense in $\scrV$. Since $\calH(G)$ is dense in $\Cc(G)$, we further deduce that $\pi(\calH(G))\bfv$ is dense in $\scrV$.
    
    Given any $\tau\in\wh{K}$, consider the associated idempotent 
    $ e_\tau(k) \coloneq d_\tau \overline{\tr\tau(k)}\dif k$ where $\dif k$ is the normalized Haar measure on $K$, so that $P_\tau\coloneq \pi(e_\tau)$ is the orthogonal projection onto the subspace $\scrV(\tau)$. Note that $e_\tau * \calH(G)\subset \calH(G)$.
    The density of $\calH(G)\subset \Cc(G)$ implies the density of
    $$ \pi(e_\tau * \calH(G))\bfv = P_{\tau}\pi(\calH(G))\bfv \subset P_\tau(\scrV) = \scrV(\tau).$$
    But by admissibility (\autoref{lem: irreducible implies admissible}), $\scrV(\tau)$ is finite-dimensional, whence
    \begin{equation*}
        \pi(e_\tau * \calH(G))\bfv = \scrV(\tau).
    \end{equation*}
    We further deduce that
    \begin{equation*}
        \scrV_K = \bigoplus_{\tau} \pi(e_\tau * \calH(G))\bfv \subset \pi(\calH(G))\bfv.
    \end{equation*}
    But since clearly $\pi(\calH(G))\subset\scrV_K$, the inclusion above must be an equality.
\end{proof}

\section{Proof of the main theorem}\label{sec: proof}

\subsection{Reduction to $K$-finite coefficients}
Using the Hecke algebra, we can promote the integrability of an arbitrary matrix coefficient to all the $K$-finite ones.
\begin{lemma}\label{lem: reduction to k-finite}
    Let $G\in\aks$ and $(\pi,\scrV)$ be an irreducible unitary representation of $G$. If a nonzero matrix coefficient of $\pi$ lies in $\rmL^p(G)$ for some $p\geq 1$, then  we have $\inn{\pi(\cdot)\bfv,\bfw}\in\rmL^p(G)$ for all $\bfv,\bfw\in\scrV_K$.
\end{lemma}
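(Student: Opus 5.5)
Let the given nonzero coefficient be $c(g)=\inn{\pi(g)\bfv_0,\bfw_0}\in\rmL^p(G)$. The plan is to use \autoref{lem: H(G) gives all K-finite} on both sides of this coefficient, and the fact that convolving an $\rmL^p$ function by a compactly supported $\rmL^1$ function stays in $\rmL^p$ (Young's inequality).

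First compute how Hecke operators act on coefficients. For $\phi,\psi\in\Cc(G)$,
\begin{equation*}
    \inn{\pi(g)\pi(\phi)\bfv_0,\pi(\psi)\bfw_0}=\int_G\int_G \phi(x)\overline{\psi(y)}\, c(y^{-1}gx)\dif x\dif y .
\end{equation*}
This is a two-sided convolution of $c$: one factor is $\psi^{*}$-type convolution on the left and the other is $\check\phi$ convolution on the right. By unimodularity and Minkowski's integral inequality (left and right translations are isometries of $\rmL^p(G)$),
\begin{equation*}
    \Norm{\inn{\pi(\cdot)\pi(\phi)\bfv_0,\pi(\psi)\bfw_0}}_p\le \Norm{\phi}_1\Norm{\psi}_1\Norm{c}_p<\infty .
\end{equation*}

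Next, since $\bfv_0\neq0$ and $\bfw_0\neq0$ (as $c\neq0$), \autoref{lem: H(G) gives all K-finite} gives $\pi(\calH(G))\bfv_0=\scrV_K$ and $\pi(\calH(G))\bfw_0=\scrV_K$. Thus any $\bfv,\bfw\in\scrV_K$ can be written exactly, not just approximately, as $\bfv=\pi(\phi)\bfv_0$ and $\bfw=\pi(\psi)\bfw_0$ with $\phi,\psi\in\calH(G)\subset\Cc(G)$. The displayed bound then gives $\inn{\pi(\cdot)\bfv,\bfw}\in\rmL^p(G)$.

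The only delicate point is the exactness of the representation of $K$-finite vectors, since density alone would not control $\rmL^p$ norms. That exactness is exactly what admissibility provides through \autoref{lem: H(G) gives all K-finite}. The convolution identity is routine, via Fubini and a change of variables, and holds for any $p\ge1$.
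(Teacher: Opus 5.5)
Your proposal is correct and follows the paper's proof: you obtain $\bfv=\pi(\phi)\bfv_0$ and $\bfw=\pi(\psi)\bfw_0$ exactly from \autoref{lem: H(G) gives all K-finite}, write the new coefficient as a two-sided convolution of the given $\rmL^p$ coefficient with $\rmL^1$ functions, and conclude by Young's (equivalently Minkowski's integral) inequality, using unimodularity for the right translations. You also correctly identify that what matters is exactness rather than density of $\pi(\calH(G))\bfv_0$, and that admissibility is what supplies it.
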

\begin{proof}
    Assume that for nonzero $\bfv_0,\bfw_0\in\scrV$, we have $\inn{\pi(\cdot)\bfv_0,\bfw_0}\in\rmL^p(G)$.
    Given any $\bfv,\bfw\in\scrV_K$, we deduce from \autoref{lem: H(G) gives all K-finite} the existence of $\phi,\psi\in\calH(G)$ with $\pi(\phi)\bfv_0=\bfv$ and $\pi(\psi)\bfw_0=\bfw$, whence
    \begin{equation*}
        \inn{\pi(\cdot)\bfv,\bfw} = \overline{\psi} * \inn{\pi(\cdot)\bfv_0, \bfw_0} * \check{\phi}.
    \end{equation*}
    Since $\calH(G)\subset \rmL^1(G)$, Young's inequality yields $\inn{\pi(\cdot)\bfv,\bfw}\in\rmL^p(G)$.
\end{proof}

\subsection{Openness of the integrability exponent}
Next, we show that for the primitive groups, the $\rmL^p$-integrability of $K$-finite matrix coefficients of an irreducible $\pi$ is an open condition in $p$.
The underlying reason is that their asymptotics are governed by finitely many exponential-polynomial terms.
We will treat the semisimple and the tree cases separately.

\begin{lemma}\label{lem: open condition in p, Lp of K-finite coefficients}
    Let $G$ be in one of the classes \ref{ks1}, \ref{ks2}, \ref{ks3}. Given $\pi\in\wh{G}$ and $p\in (1,\infty)$, if $\Innerprod{\pi(\cdot)\bfv,\bfw}\in \Lspace^p(G)$ for all $\bfv,\bfw\in \scrV_K$, then there exists some $q < p$ such that $\Innerprod{\pi(\cdot)\bfv,\bfw}\in \Lspace^q(G)$ for all $\bfv,\bfw\in \scrV_K$.
\end{lemma}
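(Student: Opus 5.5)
The plan is to use the Cartan decomposition $G=KA^+K$ (or its tree analogue) and the asymptotic expansion of $K$-finite matrix coefficients along $A^+$. Since $\pi$ is admissible by \autoref{lem: irreducible implies admissible}, a single pair of $K$-types suffices to capture the worst behaviour. For $\bfv,\bfw\in\scrV_K$, the function $c(k_1ak_2)=\inn{\pi(k_1ak_2)\bfv,\bfw}$ is controlled, uniformly in $k_1,k_2$, by finitely many functions on $A^+$. These are the matrix entries of $a\mapsto \inn{\pi(a)\bfv_i,\bfw_j}$, where $\bfv_i,\bfw_j$ are bases of the finite-dimensional spans $\pi(K)\bfv$ and $\pi(K)\bfw$. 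Combining this with the integration formula $\int_G f=\int_K\int_{A^+}\int_K f(k_1ak_2)\,\delta(a)\,\dif k_1\dif a\dif k_2$, where $\delta(a)\asymp e^{2\rho(\log a)}$ (respectively $\delta(a)\asymp |K a K/K|$ in the $p$-adic and tree cases), the question reduces to the integrability of $|\phi(a)|^q\delta(a)$ on $A^+$ for finitely many functions $\phi$.

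\textbf{Semisimple cases \ref{ks1}, \ref{ks2}.} Here I would invoke the theory of leading exponents: Harish-Chandra/Casselman asymptotics in the real case, and Casselman's theory of Jacquet modules in the $p$-adic case. Each $K$-finite coefficient restricted to $A^+$ is, away from the walls, a finite sum $\sum_\lambda p_\lambda(\log a)\,a^{\lambda-\rho}$, where $\lambda$ runs over the finitely many exponents of $\pi$ and $p_\lambda$ are polynomials. In the $p$-adic case this expansion holds exactly on translates of the cone, so the walls require a finite induction over parabolic subgroups; in the real case the walls are handled by the standard uniform estimates on sectors. The standard criterion then reads: all $K$-finite coefficients lie in $\rmL^q$ iff $\Re\lambda+(1-2/q)\rho$ is strictly negative on the positive chamber, in the appropriate sense, for every leading exponent $\lambda$ occurring in $\pi$, including those of its Jacquet modules along every standard parabolic. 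There are finitely many such exponents and the conditions are strict linear inequalities in $1/q$, so the set of admissible $q$ is open. The hypothesis for $p$ therefore persists for some $q<p$. The care needed is in proving the criterion in the ``iff'' direction: necessity uses that leading terms cannot cancel, since distinct exponents are linearly independent as functions on the cone.

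\textbf{Tree case \ref{ks3}.} Here $K$ is a vertex (or edge) stabilizer, $A^+$ is replaced by the distance $n\in\mathbb{N}$, and the sphere sizes satisfy $\delta(n)\asymp (\text{branching})^{n}$, which for semi-homogeneous trees is an average of the two degrees over parity. Boundary-transitivity gives $G=KA^+K$ with $A^+$ generated by a hyperbolic element, or at least the double cosets $K\backslash G/K$ indexed by distance. By admissibility, $\scrV^U$ is finite-dimensional for a small open $U<K$. I would then use the Hecke operator $T$ averaging over the sphere of radius $1$ (or $2$), acting on the finite-dimensional space $\scrV^U$ (or on the $K$-isotypic data), to derive a linear recurrence of bounded order for the sequence $n\mapsto$ the relevant coefficient. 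This is the main obstacle: for non-spherical $U$-invariant vectors, one must show that coefficients on $U\backslash G/U$ are governed by finitely many sequences satisfying a common finite-order linear recurrence. I would first try writing $U\backslash G/U$ as $U\backslash K\times \{n\}\times K/U$ and showing that the map $n\mapsto$ (matrix of $\pi$ compressed to $\scrV^U$ along the geodesic ray) satisfies a recursion whose coefficients come from the finite-dimensional Hecke algebra $\calH(G,U)$ acting on $\scrV^U$, in the spirit of the Cartier/Matsumoto Iwahori analysis. Once each coefficient is $\sum_i P_i(n)\mu_i^n$ for finitely many roots $\mu_i$ and $n$ large, we get $\rmL^q$-integrability iff $\max|\mu_i|^q\cdot(\text{growth}) <1$. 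This is again a strict, hence open, condition in $q$, uniformly over a basis of $\scrV^U$. Finally, enlarging $U$ over a cofinal sequence is unnecessary: the hypothesis and conclusion concern individual $K$-finite pairs, each of which lies in some $\scrV^U$, and linearity passes from a basis to all pairs.

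For the common $q$, each $\scrV^U$ a priori yields its own $q_U$. I would therefore check that the exponents $\mu_i$, or $\lambda$, are intrinsic to $\pi$, namely eigenvalues of a fixed Hecke operator on the spherical or Iwahori part, or Jacquet-module exponents. This makes the threshold independent of the $K$-type, so a single $q<p$ works for all of $\scrV_K$.
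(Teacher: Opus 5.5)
Your treatment of classes \ref{ks1} and \ref{ks2} matches the paper's: both use the leading-exponent theory (Casselman--Mili\v{c}i\'{c}, Silberger) as a black box, and finitely many strict linear inequalities make the admissible set of $q$ open. There is a sign slip: with your normalization $a^{\lambda-\rho}$ and density $e^{2\rho}$, the condition is $\Re\lambda-(1-2/q)\rho<0$, not $\Re\lambda+(1-2/q)\rho<0$. This does not affect openness.

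\textbf{Gap in the tree case.} The gap sits exactly at the step you call the main obstacle, and it is left unresolved. Sphere-averaging operators are $K$-bi-invariant, so they annihilate every non-spherical vector. For an arbitrary compact open $U$, the set $UaU\cdot Ua^nU$ is in general a union of several double cosets. Already for $U=K$ backtracking produces $Ka^{n-1}K$, and for smaller non-tidy $U$ several $Uk_ia^{n+1}l_jU$ can occur. So nothing forces $n\mapsto P_U\pi(a^n)P_U$ to satisfy a finite recurrence. Appealing to Iwahori--Matsumoto does not help, because a general closed boundary-transitive group has no Iwahori or BN-pair structure. Without an independence property, ball fixators need not factor either.

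\textbf{The missing idea.} The paper chooses $U\subset\ker\tau$ tidy above for $a$ in Willis' sense (\autoref{lem: tidy subgroup U}): $U=U_+U_-=U_-U_+$ with $U_\pm=\bigcap_{n\geq 0}a^{\pm n}Ua^{\mp n}$. Then $a^{-1}U_+a\subset U_+$ gives $UaU=U_-aU$, and $a^nU_-a^{-n}\subset U_-$ gives $(UaU)^n=Ua^nU$. Tidiness is precisely what eliminates backtracking. Consequently the normalized indicators satisfy $\mu_1^{*n}=\mu_n$, so $P_U\pi(a^n)P_U=A^n$ for the single operator $A=\pi(\mu_1)|_{\scrV^U}$ on the finite-dimensional space $\scrV^U$. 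The Jordan form of $A$ then yields an exponential-polynomial expansion on each of the finitely many families $Uk_ia^nl_jU$. Combine this with $\mu(Ua^nU)\asymp\delta^{\kappa n/2}$, where $\delta=(d_0-1)(d_1-1)$, and with a non-cancellation argument at the top modulus $R$ for a pair $\bfw_1,\bfw_2$ chosen to detect it. This gives the sharp criterion $\delta^{\kappa/2}R^q<1$, which is open in $q$.

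\textbf{The common $q$.} Your last paragraph is also unsupported as written. The eigenvalues of $A$ depend on $U$ and $\tau$, and you give no argument that they are intrinsic to $\pi$. You do not need this. By \autoref{lem: reduction to k-finite}, which uses irreducibility, admissibility and $\pi(\calH(G))\bfv=\scrV_K$, one nonzero coefficient in $\rmL^q$ forces all $K$-finite coefficients into $\rmL^q$. This is the precise justification for your opening remark that a single pair of $K$-types suffices. The paper concludes this way: it runs the argument for one type $\tau$ and one detecting pair, then propagates $\rmL^q$ to all of $\scrV_K$.
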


\subsubsection*{Semisimple case}
For the classes \ref{ks1}, \ref{ks2}, \autoref{lem: open condition in p, Lp of K-finite coefficients} is a direct consequence of the theory of leading exponents/characters, which we use below as a black box.
In the real case, the theory goes back to Harish-Chandra \cite{harish-chandra1959some,harish-chandra1984some,harish-chandra1984differential} and was reformulated by Casselman--Mili\v{c}i\'{c} in \cite{casselman-milicic1982asymptotic}. 
The theory in the non-Archimedean case was initiated by Harish-Chandra \cite{harish-chandra1970harmonic} and generalized by Silberger \cite{silberger1982asymptotics}. 
The following statement is derived from  \cite[Thm 8.14]{casselman-milicic1982asymptotic} (real) and \cite[Prop 2.5]{silberger1982asymptotics} (non-Archimedean).

\begin{lemma}\label{lem: K-finite coeff, from p to q<p, semisimple}
    For an irreducible admissible representation of a semisimple algebraic group over a nondiscrete local field, the $\rmL^p$-integrability of all K-finite matrix coefficients is characterized by finitely many strict linear inequalities involving $p$ and the leading exponents, therefore remaining valid for every $q<p$ sufficiently close to $p$.\qed
\end{lemma}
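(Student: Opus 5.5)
The plan is to reduce the statement to the asymptotic expansion of $K$-finite matrix coefficients along a Cartan decomposition $G=KA^+K$, combined with the integration formula for Haar measure in polar coordinates. I treat the real and non-Archimedean cases in parallel.

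\textbf{Step 1: asymptotic expansion.} Fix a maximal split torus $A$, a positive chamber $A^+$, and simple roots $\Delta$. By \cite[Thm 8.14]{casselman-milicic1982asymptotic} in the real case, and by \cite[Prop 2.5]{silberger1982asymptotics} in the non-Archimedean case, each $K$-finite coefficient $c(k_1ak_2)$ has the following shape on each face of $A^+$ near infinity, indexed by a subset $\Theta\subset\Delta$. It is a finite sum of terms $a^{\nu}P(\log a)$. The exponents $\nu$ range over a \emph{finite} set $\mathcal{E}_\Theta(\pi)$ of leading exponents, independent of the chosen $K$-finite vectors. The $P$ are polynomials; in the $p$-adic case they are replaced by polynomials in the valuation together with bounded oscillating factors. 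The leading terms dominate up to an error that is smaller by a fixed factor $a^{-\alpha}$.

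\textbf{Step 2: integration formula.} The Haar measure is comparable to $\delta(a)\,\mathrm{d}k_1\,\mathrm{d}a\,\mathrm{d}k_2$ with $\delta(a)\asymp a^{2\rho}$ deep in the chamber. In the $p$-adic case this becomes a counting measure on the lattice $A^+$ with weight $\asymp q^{\langle 2\rho,\cdot\rangle}$. Cover $A^+$ by finitely many cone regions adapted to the faces. Then $\int|c|^p$ is finite if and only if, for every $\Theta$ and every exponent $\nu\in\mathcal{E}_\Theta(\pi)$ that actually occurs, the integral of $|a^{p\,\mathrm{Re}\,\nu+2\rho}|$ times a polynomial converges over the relevant cone. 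Since the cones are polyhedral, this is equivalent to finitely many \emph{strict} inequalities of the form
\[
\langle \mathrm{Re}\,\nu\, p+2\rho,\ \varpi\rangle<0
\]
for $\varpi$ in a finite set of extremal rays. The strictness comes from the fact that polynomial factors never save a borderline exponential, and exponential decay absorbs any polynomial. For the converse direction, that $\rmL^p$ forces the inequalities, one needs that leading terms cannot cancel on an open cone. This is exactly the content of the characterization in \cite{casselman-milicic1982asymptotic,silberger1982asymptotics}, where the exponents are realized by nonzero coefficients.

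\textbf{Step 3: openness.} Each condition $p\,\mathrm{Re}\langle\nu,\varpi\rangle+\langle 2\rho,\varpi\rangle<0$ is an open condition in $p$, and there are finitely many of them. The set of admissible $p$ is therefore open, so it contains $[q,p]$ for some $q<p$. Since the finite exponent set serves all $K$-finite coefficients simultaneously, the same $q$ works uniformly for all of $\scrV_K$.

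\textbf{Main obstacle.} I expect the hardest part to be Step 2's equivalence. This means controlling the error terms uniformly near the walls of $A^+$, where several expansions overlap, and ensuring there is no cancellation among leading terms with the same real part but different imaginary parts or oscillations. I would handle this by the standard inductive argument on $\Theta$ via constant terms along parabolic subgroups, following Casselman--Mili\v{c}i\'{c}. For the purposes of this lemma, I take this as the cited black box.
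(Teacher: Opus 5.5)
Your proposal is correct and takes essentially the same route as the paper. The paper gives no argument of its own: it derives the statement directly from \cite[Thm 8.14]{casselman-milicic1982asymptotic} and \cite[Prop 2.5]{silberger1982asymptotics}, the same black boxes you invoke, and your Steps 1--3 simply spell out what the paper compresses into ``characterized by finitely many strict linear inequalities'' (a finite set of leading exponents for all of $\scrV_K$, polar-coordinate integration giving strict inequalities in $p$, and openness of their finite intersection).
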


\subsubsection*{Tree case}
For the generalities on groups in class \ref{ks3}, we refer to \cite{amann2003groups,semal2023irreducibly}.
For certain $G$, it is known that every irreducible non-tempered unitary representation of $G$ is spherical, \emph{cf.} \cite{amann2003groups} for $G$ with Tits' independence property and \cite{semal2024radu} for Radu groups; in this case, the proof of \autoref{lem: open condition in p, Lp of K-finite coefficients} (for non-tempered $\pi$) is a simple computation of spherical functions, \emph{cf.} \cite[Lem 4.11]{heinig-laat2024group}. However, this property fails in general. We bypass this issue by adapting the theory of leading exponents to the tree setting.

Following \cite[\S4]{heinig-laat2024group}, let $T$ be a semi-homogeneous tree of degree $(d_0,d_1)$ and $G<\Aut(T)$ be a noncompact closed subgroup which acts transitively on the boundary at infinity of $T$. 
Assume $T$ to be thick, \emph{i.e.} $d_0,d_1\geq 3$.
Then the number $\kappa$ of $G$-orbits in $V(T)$ always lies in $\setdef{1,2}$, \emph{cf.} \cite[Prop 3.1]{heinig-laat2024group}.

Fix a hyperbolic element $a\in G$ of translation length $\kappa$, a vertex $o\in V(T)$ on the axis of $a$ and $K\coloneq\Fix_G(o)$. Then the $G$-orbit of $o$ can be seen as $Go = \setdef{x\in V(T): d(x,o)\in\kappa\bbN}$ and $K\backslash G / K = \setdef{Ka^n K: n\in \bbN}$.

Let $\scrW\subset \scrV_K$ be a $K$-irreducible subspace of type $\tau\in\wh{K}$ and $P_{\scrW}\colon \scrV\to \scrW$ be the orthogonal projection onto $\scrW$.
In the spirit of \cite[Ch VIII]{knapp1986representation}, we consider the vector-valued ``spherical function''
\begin{equation*}
    \Phi\colon G\to \End_{\bbC}(\scrW),\quad \Phi(g) \coloneq P_{\scrW} \pi(g) P_{\scrW}.
\end{equation*}
For any $\bfw_1,\bfw_2\in\scrW$, we have $\inn{\pi(g)\bfw_1,\bfw_2} = \inn{\Phi(g)\bfw_1,\bfw_2}$.
For any $k_1,k_2\in K$, we have $\Phi(k_1 a k_2) = \tau(k_1)\Phi(a)\tau(k_2)$.
Since $K$ is totally disconnected, $\ker\tau$ has finite index in $K$.
In particular, $\Phi$ is bi-invariant for the action of $\ker\tau$.

\begin{lemma}[{\cite[Lem 1]{willis1994the-structure}}]\label{lem: tidy subgroup U}
    There is a compact open subgroup $U<G$ contained in $\ker\tau$ which satisfies $U=U_+ U_- = U_- U_+$ for
    \begin{equation*}
        U_+\coloneq \bigcap_{n\geq 0} a^n U a^{-n}, \qquad U_- \coloneq \bigcap_{n\geq 0} a^{-n} U a^n.\qedineq
    \end{equation*}
\end{lemma}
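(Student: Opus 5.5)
The statement is Willis's theorem that \emph{tidy-above} subgroups exist, together with the fact that they can be chosen inside any prescribed compact open subgroup. I plan to deduce it from the general structure theory of totally disconnected groups, applied to the automorphism $\alpha\colon g\mapsto aga^{-1}$ of $G$.

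\textbf{Step 1: an open subgroup inside $\ker\tau$.} Since $\tau$ is continuous and finite-dimensional, $\ker\tau$ is closed. It has finite index in $K$, so it is open in $K$, hence open in $G$. By van Dantzig's theorem we may fix a compact open subgroup $V<\ker\tau$.

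\textbf{Step 2: shrink $V$ to a tidy-above subgroup.} Define
\begin{equation*}
    V_n\coloneq\bigcap_{k=0}^{n}a^kVa^{-k}, \qquad V_\pm\coloneq\bigcap_{k\geq 0}a^{\pm k}Va^{\mp k}.
\end{equation*}
Each $V_n$ is a compact open subgroup with $V_n\subset V\subset\ker\tau$. A direct computation gives
\begin{equation*}
    (V_n)_+=V_+, \qquad (V_n)_-=a^{n}V_-a^{-n}\cap V_n \supseteq a^nV_-a^{-n}.
\end{equation*}
The goal is to show $V_n=(V_n)_+(V_n)_-$ for $n$ large and to take $U\coloneq V_n$. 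The identity $U_+U_-=U_-U_+$ then follows by taking inverses, because $U$, $U_+$ and $U_-$ are subgroups.

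To prove $V_n=(V_n)_+(V_n)_-$ I would follow Willis's compactness argument. For $x\in V_n$, I would try to write $x=yz$ with $y\in V_+$ and $z$ whose backward iterates $a^{-k}za^{k}$ stay in $V$. The key input is that the chain of compact sets $\bigcap_{k=0}^{m}a^kVa^{-k}$ stabilizes modulo the open subgroup $V_n$. More precisely, the indices
\begin{equation*}
    [V_m : V_m\cap aV_ma^{-1}]
\end{equation*}
are non-increasing positive integers in $m$. They therefore stabilize from some $n$ on, and I would then check that this stabilization forces $V_n\subset V_+\,a^nV_-a^{-n}$. The argument needs compactness: a sequence of corrections $y_m$ with $y_m\in a^{k}Va^{-k}$ for $k\leq m$ has a cluster point lying in $V_+$.

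\textbf{Main obstacle.} Turning the index stabilization into the factorization $V_n=V_+\cdot(V_n)_-$ is the genuinely nontrivial part. It is exactly Willis's ``tidy above'' lemma, and the statement is attributed to \cite[Lem 1]{willis1994the-structure} for that reason. I would first try the index-counting argument described above. Failing a clean write-up, I would cite Willis directly, noting that his construction starts from an \emph{arbitrary} compact open $V$ and only intersects conjugates of it. The output therefore stays inside $V\subset\ker\tau$, which is the only additional point our statement requires.

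A tree-specific shortcut, such as taking $U$ to be the pointwise stabilizer of a large ball around $o$, works for groups with Tits' independence property. It need not work for general $G$ in class \ref{ks3}, which is why I would rely on the general totally disconnected argument.
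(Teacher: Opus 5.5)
Your proposal is correct and takes the same approach as the paper, which gives no argument beyond citing Willis's tidy-above lemma for conjugation by $a$. Your point that Willis's subgroup $V_n=\bigcap_{k=0}^n a^kVa^{-k}$ stays inside the starting subgroup $V$ is exactly what gives $U<\ker\tau$, and your index-stabilization sketch is the standard argument behind that lemma; as a minor simplification, $\ker\tau$ is itself compact open, so you can take $V=\ker\tau$ without invoking van Dantzig.
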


\begin{lemma}\label{lem: UaU nth power = Ua^n U}
    $(UaU)^n = Ua^n U$ for all $n\in\bbN^*$.
\end{lemma}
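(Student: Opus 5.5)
The plan is an induction on $n$ driven by the single identity $UaU = U_+ a U_-$-type factorization. The case $n=1$ is trivial, so it suffices to prove $UaU\cdot UaU = Ua^2U$; the general step $(Ua^nU)(UaU)=Ua^{n+1}U$ is identical.

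First, I would record how conjugation by $a$ acts on $U_\pm$. Directly from the definitions,
\begin{equation*}
    a U_+ a^{-1} = \bigcap_{n\geq 1} a^n U a^{-n} \subset U_+,
\end{equation*}
so $aU_+a^{-1}\subset U_+$, and in the same way $a^{-1}U_-a\subset U_-$. Next I would use \autoref{lem: tidy subgroup U} to write the middle factor as $U=U_-U_+$, which gives
\begin{equation*}
    UaU\,UaU = UaUaU = U a U_- U_+ a U.
\end{equation*}
The factor adjacent to the left $a$ is absorbed by $aU_- = (aU_-a^{-1})a \subset Ua$, since $aU_-a^{-1}\subset aUa^{-1}$...

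That last containment is the wrong direction: $aU_-a^{-1}$ need not lie in $U$. The correct move is to choose the factorization so that each piece is pushed toward a $U$ it is contained in. Writing $U=U_+U_-$ instead, I get $aUa = aU_+U_-a$, and then:
\begin{itemize}
    \item $aU_+ = (aU_+a^{-1})a \subset U_+a$, using $aU_+a^{-1}\subset U_+$;
    \item $U_-a = a(a^{-1}U_-a) \subset aU_-$, using $a^{-1}U_-a\subset U_-$.
\end{itemize}
Together these give
\begin{equation*}
    aUa \subset U_+\, a^2\, U_- \subset Ua^2U,
\end{equation*}
and hence $UaUaU\subset Ua^2U$. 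The reverse inclusion is immediate, since $a^2 = a\cdot e\cdot a \in aUa$.

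For the induction, I would use the same manipulation with $a^n$ in place of the left-hand $a$:
\begin{equation*}
    a^nUa = a^nU_+U_-a \subset U_+\,a^{n+1}\,U_-.
\end{equation*}
Here $a^nU_+a^{-n}\subset U_+$, which follows by iterating the $n=1$ case of $aU_+a^{-1}\subset U_+$. This yields
\begin{equation*}
    (UaU)^{n+1} = Ua^nU\cdot UaU = Ua^nUaU = Ua^{n+1}U.
\end{equation*}

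The one genuine obstacle is choosing the order of the tidy factorization, $U_+U_-$ rather than $U_-U_+$, so that $U_+$ sits next to the left $a$ and $U_-$ next to the right $a$. Once that is fixed, the argument is just the monotonicity $a^{\pm1}U_\pm a^{\mp1}\subset U_\pm$, which is built into the definitions.
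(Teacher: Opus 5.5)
There is a genuine gap: both monotonicity inclusions you rely on point the wrong way. Removing the $n=0$ term from an intersection can only enlarge it, so
\[
aU_+a^{-1}=\bigcap_{n\geq 1}a^nUa^{-n}\supseteq U_+ ,\qquad a^{-1}U_-a=\bigcap_{n\geq 1}a^{-n}Ua^{n}\supseteq U_- .
\]
The inclusions that actually hold are $a^{-1}U_+a\subseteq U_+$ and $aU_-a^{-1}\subseteq U_-$. For the latter, $aU_-a^{-1}=\bigcap_{m\geq -1}a^{-m}Ua^{m}$ includes the term $m=0$, i.e.\ $U$ itself. Hence your steps $aU_+\subseteq U_+a$, $U_-a\subseteq aU_-$ and $a^nU_+a^{-n}\subseteq U_+$ are unjustified.

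Worse, the resulting inclusion $aUa\subseteq U_+a^2U_-$ is false in general. Take $\SL(2,\bbF)$ over a non-Archimedean local field with uniformizer $\varpi$ and ring of integers $\mathcal{O}$, let $a=\mathrm{diag}(\varpi,\varpi^{-1})$, and let $U$ be the principal congruence subgroup of level $\varpi$. Then $U=U_+U_-=U_-U_+$, with $U_+$ (resp.\ $U_-$) the lower (resp.\ upper) triangular part of $U$. For $u=\begin{pmatrix}1&\varpi\\0&1\end{pmatrix}\in U_-$ one has $aua=\begin{pmatrix}1&\varpi^3\\0&1\end{pmatrix}a^2$. Writing this as $va^2v'$ with $v\in U_+$ and $v'\in U_-$ would force $v^{-1}\begin{pmatrix}1&\varpi^3\\0&1\end{pmatrix}\in a^2U_-a^{-2}$. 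Upper-triangularity forces $v$ to be diagonal, and then the upper-right entry has valuation exactly $3$. But every element of $a^2U_-a^{-2}$ has upper-right entry in $\varpi^5\mathcal{O}$, a contradiction.

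Ironically, the route you abandoned was the correct one: $aU_-a^{-1}\subseteq U_-\subseteq U$ does hold. Use $U=U_-U_+$ together with the correct inclusions $a^nU_-=(a^nU_-a^{-n})a^n\subseteq U_-a^n$ and $U_+a=a(a^{-1}U_+a)\subseteq aU_+$. This gives
\[
a^nUa=a^nU_-U_+a\subseteq U_-\,a^{n+1}\,U_+\subseteq Ua^{n+1}U ,
\]
so $(Ua^nU)(UaU)\subseteq Ua^{n+1}U$, and your reverse inclusion and induction then go through unchanged. This is exactly the paper's argument, which phrases it as $UaU=U_-aU$ followed by $Ua^nU_-aU\subseteq Ua^{n+1}U$.
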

\begin{proof}
    It suffices to prove that $(UaU)^n\subset U a^n U$. Since $a^{-1}U_+ a \subset U_+$, we have $UaU = U_{-} U_{+} a U = U_{-} a U$. Since $aU_{-}a^{-1}\subset U_{-}$, we deduce
    \begin{equation*}
        (Ua^nU)(UaU) = Ua^nU_{-} a U \subset Ua^{n+1} U.
    \end{equation*}
    By induction on $n$, we deduce $(UaU)^n\subset U a^n U$ for all $n$.
\end{proof}

Fix a Haar measure $\mu$ on $G$. For $n\in\bbN$, denote 
    $$\mu_n \coloneq \frac{1}{\mu(Ua^n U)}\indicator_{Ua^n U}\in\calH(G).$$
\begin{lemma}\label{lem: mu1 *n = mun}
    $\mu_1^{*n} = \mu_n$ for all $n\in\bbN^*$.
\end{lemma}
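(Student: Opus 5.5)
We argue by induction on $n$ using \autoref{lem: UaU nth power = Ua^n U} together with a computation of convolutions of normalized indicator functions of double cosets. The case $n=1$ is trivial. For the inductive step, suppose $\mu_1^{*n}=\mu_n$; it then suffices to show
\begin{equation*}
    \mu_n * \mu_1 = \mu_{n+1}.
\end{equation*}

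First we determine the support of $\mu_n*\mu_1$. It is contained in $(Ua^nU)(UaU)$, which equals $Ua^{n+1}U$ by \autoref{lem: UaU nth power = Ua^n U}, or more directly by the inclusion established in its proof. Next, $\mu_n*\mu_1$ is left and right $U$-invariant, since both factors are. It therefore suffices to show that $\mu_n*\mu_1$ is constant on $Ua^{n+1}U$; the constant is then forced to be $1/\mu(Ua^{n+1}U)$, because $\int \mu_n*\mu_1 = \int\mu_n\int\mu_1 = 1$. Note that $G$ is unimodular, so no modular factors appear.

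By bi-$U$-invariance, constancy on the single double coset $Ua^{n+1}U$ is automatic: a bi-$U$-invariant function takes one value on each $U$-double coset. So no further argument is needed there. The only genuine content is the support inclusion, which follows from $UaU=U_-aU$ together with $aU_-a^{-1}\subset U_-$, giving
\begin{equation*}
    Ua^nU\cdot UaU = Ua^nU_-aU = Ua^n U_- a U \subset Ua^{n+1}U.
\end{equation*}

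Concretely, for $g\in Ua^{n+1}U$ we have
\begin{equation*}
    (\mu_n*\mu_1)(g)=\frac{\mu(Ua^nU\cap gUa^{-1}U)}{\mu(Ua^nU)\,\mu(UaU)},
\end{equation*}
and this is unchanged when $g$ is replaced by $u g u'$ with $u,u'\in U$. Hence $\mu_n*\mu_1 = c\,\indicator_{Ua^{n+1}U}$, and integrating gives $c=1/\mu(Ua^{n+1}U)$.

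The main point to check carefully, which I expect to be the only real obstacle, is that the support is exactly one double coset. This is what \autoref{lem: UaU nth power = Ua^n U} provides, via the tidiness $U=U_-U_+$ from \autoref{lem: tidy subgroup U}. As a byproduct, the argument yields the identity $\mu(Ua^nU)\mu(UaU)\ge \mu(Ua^{n+1}U)$, although it is not needed here.
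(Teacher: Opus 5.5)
Your proof is correct and is essentially the paper's argument: $\mu_1^{*n}$ is a bi-$U$-invariant probability density supported in $(UaU)^n=Ua^nU$, hence it equals $\mu_n$; your induction on $\mu_n*\mu_1$ only unrolls the induction already contained in the proof of the preceding lemma. One small correction to your closing aside, which is not needed: as written, $\mu(Ua^nU)\mu(UaU)\ge\mu(Ua^{n+1}U)$ is not invariant under rescaling $\mu$, and the correct form is $\mu(Ua^nU)\mu(UaU)\ge\mu(U)\,\mu(Ua^{n+1}U)$, which follows from your formula at $g=a^{n+1}$ together with $a^nU\subset Ua^nU\cap a^{n+1}Ua^{-1}U$.
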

\begin{proof}
    Since $\mu_1$ is a $U$-bi-invariant probability measure, so is the convolution $\mu_1^{*n}$. But by \autoref{lem: UaU nth power = Ua^n U}, $\mu_1^{*n}$ is supported on the double coset $Ua^n U$, whence one must have $\mu_1^{*n} = \mu_n$.
\end{proof}

Consider the Hecke operator $\matr{A}\coloneq \pi(\mu_1)|_{\scrV^U}$ lying in $\End_{\bbC}(\scrV^U)$. \autoref{lem: mu1 *n = mun} gives $\pi(\mu_n)|_{\scrV^U} = \matr{A}^n$.
Since $\scrW\subset \scrV^U$ and $\Phi(a^n)=P_{\scrW}\pi(\mu_n) P_{\scrW}$, we have
\begin{equation*}
    \Phi(k a^n l) = \tau(k)P_{\scrW} \matr{A}^n P_{\scrW} \tau(l), \quad \text{for all $k,l\in K$, $n\in\bbN$.}
\end{equation*}
Let $r\coloneq [K:U]<\infty$ and $K = \sqcup_{i=1}^{r} Uk_i = \sqcup_{j=1}^{r} l_j U$, so that
\begin{equation}\label{eq intm: double U-coset}
    U\backslash G / U = \setdef{U k_i a^n l_j U : 1\leq i,j\leq r,\, n\in\bbN}.
\end{equation}
Note that different parameters on the RHS may give the same $U$-double coset, but the multiplicities are always bounded above by $r^2$.
As $\dim \scrV^{U}<\infty$ by admissibility, we can apply the Jordan normal form of $\matr{A}$ to obtain the expansion
\begin{equation}\label{eq intm: expansion of Phi}
    \Phi(k_i a^n l_j) = \sum_{\lambda\in\sigma(\matr{A})} \sum_{s=0}^{m_{\lambda}-1} n^s \lambda^n \matr{C}_{\lambda,s,i,j},
\end{equation}
where $\matr{C}_{\lambda,s,i,j}\in \End_{\bbC}(\scrW)$ does not depend on $n$.
Let $R$ be the largest modulus of those $\lambda\in\sigma(\matr{A})$ occurring on the RHS. We fix vectors $\bfw_1,\bfw_2\in\scrW$ satisfying $\inn{\matr{C}_{\lambda_0,s_0,i_0,j_0}\bfw_1, \bfw_2}\ne 0$ with $\norm{\lambda_0}=R$ and $s_0$ maximal possible. We may assume $i_0=j_0=1$.
By \eqref{eq intm: double U-coset} and \eqref{eq intm: expansion of Phi}, we thus have
\begin{equation*}
    \int_G \modinn{\pi(g)\bfw_1,\bfw_2}^q\dif g \asymp \sum_{i,j,n} \mu(Uk_i a^n l_jU) \norm{\sum_{\lambda,s} n^s \lambda^n \inn{\matr{C}_{\lambda,s,i,j}\bfw_1, \bfw_2}}^q.
\end{equation*}
Since $[K:U]<\infty$, we have $\mu(Uk_i a^n l_j U) \asymp \mu(Ka^n K)$ uniformly in $n$.
Following \cite[\S4]{heinig-laat2024group}, let $\delta\coloneq (d_0-1)(d_1-1)$. Then \cite[Prop 4.1]{heinig-laat2024group} gives
\begin{equation*}
    \mu(Ka^n K) = \frac{d_0}{d_0 - 1} \delta^{\frac{\kappa n}{2}},
\end{equation*}
whence uniformly in $n$,
\begin{equation*}
    \mu(U k_i a^n l_j U) \asymp \delta^{\frac{\kappa n}{2}}.
\end{equation*}
By writing $C_{\lambda,s,i,j}\coloneq \inn{\matr{C}_{\lambda,s,i,j}\bfw_1, \bfw_2}$, we get
\begin{equation}\label{eq ppty: expansion of int Lq}
    \int_G \modinn{\pi(g)\bfw_1,\bfw_2}^q\dif g \asymp \sum_{i,j,n} \delta^{\frac{\kappa n}{2}} \norm{\sum_{\lambda,s } C_{\lambda,s,i,j} n^s \lambda^n }^q.
\end{equation}

\begin{lemma}\label{lem: Lq iff delta R < 1}
    $\inn{\pi(\cdot)\bfw_1,\bfw_2}\in\rmL^q(G)$ iff $\delta^{\kappa/2}R^q<1$.
\end{lemma}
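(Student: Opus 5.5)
The plan is to reduce the statement to the convergence of a series in $n$ using \eqref{eq ppty: expansion of int Lq}. Since $r=[K:U]$ is finite, the sum over $(i,j)$ is finite. Hence the integral is finite if and only if, for every pair $(i,j)$, the series
\begin{equation*}
    S_{i,j}(q) \coloneq \sum_{n\in\bbN} \delta^{\frac{\kappa n}{2}} \Bigl|\sum_{\lambda,s} C_{\lambda,s,i,j} n^s\lambda^n\Bigr|^q
\end{equation*}
converges. We then prove the two implications separately.

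\emph{Sufficiency.} By the definition of $R$, every $\lambda$ appearing with a nonzero $C_{\lambda,s,i,j}$ has $|\lambda|\leq R$. So each inner sum $f_{i,j}(n)\coloneq\sum_{\lambda,s} C_{\lambda,s,i,j}n^s\lambda^n$ satisfies $|f_{i,j}(n)|\ll (1+n)^{M}R^n$, where $M$ is the largest Jordan block size. Then $S_{i,j}(q)\ll\sum_n (1+n)^{Mq}(\delta^{\kappa/2}R^q)^n$. If $\delta^{\kappa/2}R^q<1$, this series converges for every $(i,j)$.

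\emph{Necessity.} It suffices to show that $S_{1,1}(q)=\infty$ whenever $\delta^{\kappa/2}R^q\geq 1$. Write
\begin{equation*}
    f_{1,1}(n) = n^{s_0}R^n\Bigl(\sum_{|\lambda|=R} C_{\lambda,s_0,1,1}\,\theta_\lambda^n + o(1)\Bigr),
\end{equation*}
where $\theta_\lambda\coloneq\lambda/R$ lies on the unit circle. The error term collects the contributions with $|\lambda|<R$, which decay exponentially faster, and those with $|\lambda|=R$ and $s<s_0$, which are smaller by a power of $n$. Here we use that $s_0$ was chosen maximal among the top-modulus terms; I would interpret this as maximal over all $(\lambda,i,j)$ with $|\lambda|=R$ and the relevant coefficient nonzero, so that no top-modulus term at $(1,1)$ has degree exceeding $s_0$. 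The bracket contains the trigonometric polynomial $h(n)\coloneq\sum_{|\lambda|=R} c_\lambda\theta_\lambda^n$, with distinct unimodular $\theta_\lambda$ and $c_{\lambda_0}\neq 0$.

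The key step is to show that $|h(n)|$ does not tend to $0$. Parseval's identity for almost periodic sequences gives
\begin{equation*}
    \frac1N\sum_{n<N}|h(n)|^2\to\sum_\lambda|c_\lambda|^2>0.
\end{equation*}
Since $|h|$ is bounded, it follows that $|h(n)|\geq\varepsilon$ on a set of $n$ of positive lower density. For all large such $n$, the $o(1)$ term is below $\varepsilon/2$, so $|f_{1,1}(n)|\geq (\varepsilon/2)\,n^{s_0}R^n$ on infinitely many $n$. On these $n$ the terms of $S_{1,1}(q)$ are bounded below by $c\,n^{s_0q}(\delta^{\kappa/2}R^q)^n\geq c>0$, so the series diverges.

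This cancellation step among the top-modulus eigenvalues is the main obstacle, and the averaging argument above is what handles it. One more edge case must be checked: if all coefficients vanish, then $R$ is undefined and the coefficient is identically zero, which I would treat by convention (for instance setting $R=0$). Apart from this, the computation is routine once the expansion \eqref{eq ppty: expansion of int Lq} is in hand.
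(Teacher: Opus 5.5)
Your proof is correct and follows the paper's argument. For sufficiency you use the same polynomial-times-$R^n$ bound, and for necessity you use the same restriction to the $(1,1)$ terms and factor out $n^{s_0}R^n$. The one step you add is a mean-square argument, $\frac1N\sum_{n<N}|h(n)|^2\to\sum_\lambda|c_\lambda|^2>0$, to justify what the paper only asserts: that the nonzero sum $\sum_{|\lambda|=R}C_{\lambda,s_0,1,1}(\lambda/R)^n$ has modulus bounded below infinitely often. Your $R=0$ convention also sensibly covers the degenerate case in which every relevant coefficient vanishes.
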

\begin{proof}
    If $\delta^{\kappa/2}R^q<1$, then Minkowski's inequality on the RHS of \eqref{eq ppty: expansion of int Lq} gives
    \begin{equation*}
        \int_G \modinn{\pi(g)\bfw_1,\bfw_2}^q\dif g \ll \sum_{n} n^{\dim(\scrV^U)} \delta^{\frac{\kappa n}{2}} R^{nq} < +\infty.
    \end{equation*}

    On the other hand, if $\inn{\pi(\cdot)\bfw_1,\bfw_2}\in\rmL^q(G)$ and we assume by contradiction that $\delta^{\kappa/2}R^q\geq 1$. From \eqref{eq ppty: expansion of int Lq} we get
    \begin{equation}\label{eq intm: lower bound Lq}
        \int_G \modinn{\pi(g)\bfw_1,\bfw_2}^q\dif g \gg \sum_n \left(\delta^{\frac{\kappa}{2}}R^q\right)^n \norm{\sum_{\lambda,s} C_{\lambda,s,1,1} n^s \left(\frac{\lambda}{R}\right)^n }^q.
    \end{equation}
    Recall that we have chosen $\bfw_1,\bfw_2$ so that $C_{\lambda_0,s_0,1,1}\ne 0$ for some $\lambda_0$ of modulus $R$ and $s_0$ maximal possible. We have thus
    \begin{equation*}
        n^{-s_0} \sum_{\lambda, s} C_{\lambda,s,1,1} n^s \left(\frac{\lambda}{R}\right)^n = \sum_{\norm{\lambda}=R} C_{\lambda,s_0,1,1} \left(\frac{\lambda}{R}\right)^n + o(1),
    \end{equation*}
    where the leading term is a nonzero finite sum of powers of unit complex numbers.
    Hence there is some constant $c>0$ such that
    \begin{equation*}
        \norm{\sum_{\lambda,s} C_{\lambda,s,1,1} n^s \left(\frac{\lambda}{R}\right)^n } \geq c n^{s_0}, \quad \text{for infinitely many $n$.}
    \end{equation*}
    But then in \eqref{eq intm: lower bound Lq} the $\rmL^q$-norm diverges, a contradiction. Hence $\delta^{\kappa/2}R^q < 1$.
\end{proof}

Using $\calH(G)$, we can now control the integrability of all $K$-finite coefficients.
\begin{proof}[Proof of \autoref{lem: open condition in p, Lp of K-finite coefficients} for class \ref{ks3}]
    By hypothesis, $\inn{\pi(\cdot)\bfw_1,\bfw_2}$ lies in $\rmL^p$, whence \autoref{lem: Lq iff delta R < 1} yields $\delta^{\kappa/2} R^p < 1$ which remains valid for all $q<p$ sufficiently close to $p$. Again \autoref{lem: Lq iff delta R < 1} implies that $\inn{\pi(\cdot)\bfw_1,\bfw_2}$ lies in $\rmL^q$ for some $q\in (1,p)$, but then \autoref{lem: reduction to k-finite} implies that all $K$-finite coefficients lie in $\rmL^q$.
\end{proof}

\subsection{Conclusion of proof}
Using the results above, we first prove \autoref{mythm: S-algebraic, Lp irreducible ur} for the classes \ref{ks1}, \ref{ks2} and \ref{ks3}. Note that it is trivial for compact groups.
\begin{proof}[Proof of \autoref{mythm: S-algebraic, Lp irreducible ur} for primitive classes]
    Let $\pi\in\wh{G}$ be with a nonzero matrix coefficient in $\rmL^p(G)$. By \autoref{lem: reduction to k-finite}, all $K$-finite matrix coefficients of $\pi$ lie in $\rmL^p(G)$. By \autoref{lem: open condition in p, Lp of K-finite coefficients}, the $K$-finite coefficients lie in $\rmL^{q_0}(G)$ for some $q_0 \in (2,p)$. Hence, $\pi$ is strongly $\rmL^{q_0+}$. Then \autoref{thm samei-wiersma: almost Lp implies uniformly Lp, kunze-stein unimodular} implies that $\pi$ is uniformly $\rmL^{q_0+}$, whence $\pi$ is uniformly $\rmL^q$ for $q\in(q_0, p)$, as desired.
\end{proof}

\subsubsection*{Permanence}
It suffices to verify that the statement of \autoref{mythm: S-algebraic, Lp irreducible ur} is preserved under the elementary operations \ref{a}, \ref{b}, \ref{c}.
The proof below proceeds similarly as that of \autoref{lem: irreducible implies admissible}.
\begin{proof}[Proof of permanence of \autoref{mythm: S-algebraic, Lp irreducible ur}]
    (a) Let $G=G_1\times G_2$ with both $G_i$ satisfying the theorem. Given an irreducible $(\pi, \scrV)\cong (\pi_1,\scrV_1)\boxtimes (\pi_2,\scrV_2)$, we have $\scrV_K = (\scrV_1)_{K_1} \otimes (\scrV_2)_{K_2}$.
    Since $\pi$ has a nonzero coefficient in $\rmL^p$, \autoref{lem: reduction to k-finite} implies that for any $\bfv_i,\bfw_i\in(\scrV_i)_{K_i}$ one has
    \begin{equation*}
        G\ni (g_1, g_2) \mapsto \inn{\pi_1(g_1)\bfv_1,\bfw_1} \inn{\pi_2(g_2)\bfv_2,\bfw_2}  \quad\text{lies in $\rmL^p(G)$}.
    \end{equation*}
    Hence, both $\pi_i$ are strongly $\rmL^p$ for $G_i$. The theorem thus implies that they are both uniformly $\rmL^{q_0}$ for some $q_0\in(2,p)$. Thus $\pi$ is strongly $\rmL^{q_0}$ and hence uniformly $\rmL^{q_0+}$ by \autoref{thm samei-wiersma: almost Lp implies uniformly Lp, kunze-stein unimodular}. Thus $\pi$ is uniformly $\rmL^q$ for $q\in(q_0, p)$.

    (b1) Let $G\lhd G_0$ be a closed finite-index normal subgroup with $G_0$ satisfying the theorem and $K_0=K$. For $\pi\in\wh{G}$ with a nonzero coefficient in $\rmL^p(G)$, consider $\pi_0\coloneq \Ind_G^{G_0}\pi$. 
    \autoref{lem: reduction to k-finite} shows that all $K$-finite matrix coefficients of $\pi$ are in $\rmL^p(G)$, and thus the finite-coset model of $\Ind_G^{G_0}\pi$ implies that all $K$-finite matrix coefficients are in $\rmL^p(G_0)$.
    Now by Mackey, $\pi_0$ is a finite direct sum of irreducible unitary representations of $G_0$.
    Hence every summand of $\pi_0$ has nonzero $K$-finite coefficients in $\rmL^p(G_0)$, to which we can apply the theorem and obtain a common $q\in (2,p)$ so that $\pi_0$ is uniformly $\rmL^q$. Since $\pi_0|_{G}\supset \pi$, we deduce that $\pi$ is uniformly $\rmL^q$.

    (b2) Let $G\rhd G_0$ be a finite-index overgroup with $G_0$ closed normal and satisfying the theorem. Given $\pi\in\wh{G}$ with a nonzero $\rmL^p$ coefficient, all $K$-finite coefficients of $\pi$ are also $\rmL^p$ by \autoref{lem: reduction to k-finite}.
    Since the restriction $\pi|_{G_0}$ is a finite direct sum of irreducible $G_0$-representations by Mackey, we deduce that the $K_0$-finite coefficients of each summand lie in $\rmL^p(G_0)$.
    Applying the theorem on $G_0$ to the summands yields a common $q\in (2,p)$ such that $\pi|_{G_0}$ is uniformly $\rmL^q$.
    However, because of finite index, a matrix coefficient of $\pi$ restricts to a translate of a coefficient of $\pi|_{G_0}$ on each $G_0$-coset, whence $\pi$ is uniformly $\rmL^q$ as well.

    (b3) For the general case $G>G_0$ or $G<G_0$ of finite index, it suffices to pass to the normal core and reduce to (b1) or (b2).

    (c) Let $G=G_0/C$ be a quotient by a compact normal subgroup $C$ with $G_0$ satisfying the theorem. Irreducible representations of $G$ are identified with $C$-trivial irreducible representations of $G_0$. The theorem for $G$ therefore follows easily from that for $G_0$.
\end{proof}

\section{The cyclic question: a counterexample}
    Finally we provide a counterexample to the cyclic version \autoref{conj samei-wiersma: cyclic}.
    The construction exploits the failure of uniform $\rmL^p$-integrability to persist at the endpoint of a convergent family in the complementary series of $\SL(2,\bbR)$.

    Let $G=\SL(2,\bbR)$ and $p\in(2,\infty)$.
    Following \emph{e.g.} \cite[\S9.5]{einsiedler-ward2025unitary}, we parametrize the complementary series $\setdef{\gamma^s}$ of $G$ by $s\in (0,1)$ so that the spherical function associated with $\gamma^s$ lies in $\rmL^{q}(G)$ precisely when $q>p_s\coloneq 2/(1-s)$.
    By \autoref{thm samei-wiersma: almost Lp implies uniformly Lp, kunze-stein unimodular}, each $\gamma^s$ is uniformly $\rmL^p$ for $p>p_s$ and not uniformly $\rmL^p$ for $p\leq p_s$.

    Fix any $s\in (0,1)$. Given a strictly increasing sequence $(s_n)_{n\in\bbN}$ in $(0,s)$ with $s_n\nearrow s$, define
    \begin{equation*}
        (\pi, \scrV) \coloneq \bigoplus_{n\in\bbN} (\gamma^{s_n}, \scrV^{s_n}).
    \end{equation*}

    \begin{proposition}\label{prop: disprove samei-wiersma conj, SL2R}
        Given any $p\in(2,\infty)$, set $s\coloneq 1-2/p$ in the construction above.
        Then $\pi$ admits a cyclic vector $\bfv$ with $\Innerprod{\pi(\cdot)\bfv,\bfv}\in \Lspace^p(G)$, but $\pi$ is not uniformly $\Lspace^p$.
    \end{proposition}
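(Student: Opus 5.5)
We build the cyclic vector from the spherical vectors and argue the two halves separately: the positive half is a convergence estimate, and the negative half goes through weak containment.

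\textbf{Construction of the cyclic vector.} For each $n$, let $\bfe_n\in\scrV^{s_n}$ be the unit $K$-invariant (spherical) vector, with $K=\SO(2)$. Its coefficient $\varphi_n\coloneq\Innerprod{\gamma^{s_n}(\cdot)\bfe_n,\bfe_n}$ is the spherical function of $\gamma^{s_n}$. Set
\begin{equation*}
    \bfv\coloneq\sum_n c_n\bfe_n,
\end{equation*}
with positive weights $c_n$ to be chosen such that $\sum_n c_n^2<\infty$.

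\textbf{Cyclicity.} The representations $\gamma^{s_n}$ are irreducible and pairwise inequivalent, because the $s_n$ are distinct. Hence the commutant of $\pi$ consists of diagonal operators $\bigoplus\lambda_n\mathrm{Id}$. The closed cyclic subspace $\overline{\pi(\Cc(G))\bfv}$ is invariant under $\pi$, so its orthogonal projection lies in this commutant and therefore equals the projection onto $\bigoplus_{n\in S}\scrV^{s_n}$ for some set $S$. Since $\bfv$ lies in this subspace and every $c_n\neq0$, we get $S=\bbN$. So $\bfv$ is cyclic. More concretely, one can use the $K$-bi-invariant convolution algebra: the spherical transform separates the distinct characters $\varphi_n$, which lets one isolate each $\bfe_n$.

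\textbf{$\rmL^p$ bound.} We have
\begin{equation*}
    \Innerprod{\pi(\cdot)\bfv,\bfv}=\sum_n c_n^2\varphi_n .
\end{equation*}
Since $s_n<s$, we have $p_{s_n}<p_s=p$, so each $\varphi_n\in\rmL^p(G)$ with $\Norm{\varphi_n}_p<\infty$. We then choose $c_n^2\le 2^{-n}/(1+\Norm{\varphi_n}_p)$. The series converges absolutely in $\rmL^p(G)$, and it converges pointwise to the coefficient, so $\Innerprod{\pi(\cdot)\bfv,\bfv}\in\rmL^p(G)$. No explicit asymptotics of $\varphi_n$ are needed; we only need finiteness of each norm, which follows from the stated parametrization.

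\textbf{Failure of uniform $\rmL^p$.} This is the main step. The plan is to show that $\gamma^s$ is weakly contained in $\pi$ and then apply \autoref{lem: uniformly Lp passes to weakly contained}. If $\pi$ were uniformly $\rmL^p$, then $\gamma^s$ would be uniformly $\rmL^p$ with $p=p_s$. That is false: its spherical function is not in $\rmL^{p_s}$, by the parametrization.

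For the weak containment, we use that $\varphi_n\to\varphi_s$ uniformly on compacta as $s_n\to s$. This holds because the spherical functions are given by an integral formula, for instance $\varphi_s(g)=\int_K\|gk e_1\|^{-(1+s)}\dif k$ up to normalization, which depends continuously on $s$ locally uniformly in $g$. Since $\varphi_s$ is a normalized positive definite function associated with the irreducible $\gamma^s$, the characterization of weak containment via uniform-on-compacta limits of sums of coefficients (see \cite[\S1.C]{bekka-harpe2020unitary}) gives $\gamma^s\prec\bigoplus_n\gamma^{s_n}=\pi$.

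The main obstacle is to justify this continuity in $s$ with the specific normalization of \cite[\S9.5]{einsiedler-ward2025unitary}. An alternative route is the operator-norm bound: $\opnorm{\gamma^s(\phi)}\le\sup_n\opnorm{\gamma^{s_n}(\phi)}$ for $\phi\in\Cc(G)$, obtained by realizing all $\gamma^{s}$ on a common space of functions on $K$ whose inner products vary continuously.
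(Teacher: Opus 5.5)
Your proposal is correct and takes essentially the same route as the paper. The weak containment $\gamma^s\prec\pi$ together with $p_s=p$ rules out uniform $\mathrm{L}^p$-integrability, a weighted sum $\sum_n c_n\mathbf{e}_n$ with fast-decaying weights has an $\mathrm{L}^p$ coefficient by Minkowski's inequality, and cyclicity follows from Schur's lemma for the pairwise inequivalent irreducible summands. The differences are minor: using spherical vectors gives $\varphi_n\in\mathrm{L}^p$ directly from the parametrization, and you justify the Fell convergence $\gamma^{s_n}\to\gamma^s$ via locally uniform convergence of the spherical-function integrals rather than citing Einsiedler--Ward; neither change affects the proof's validity.
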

    \begin{proof}
        In the Fell topology, the sequence $\gamma^{s_n}$ converges to $\gamma^s$, so $\gamma^s$ is weakly contained in $\pi$, \emph{cf.} \cite[\S9.6]{einsiedler-ward2025unitary}. Since $\gamma^s$ is not uniformly $\rmL^p$, neither is $\pi$ by \autoref{lem: uniformly Lp passes to weakly contained}.

        Let $\bfv_n$ be a unit vector in $\scrV^{s_n}$ and set $c_n\coloneq \inn{\gamma^{s_n}(\cdot)\bfv_n,\bfv_n}$.
        Since $\gamma^{s_n}$ is uniformly $\rmL^p$, we have $c_n\in\rmL^p(G)$.
        We choose a sequence $(a_n)$ in $(0,1)$ decaying sufficiently fast to 0, so that 
        \begin{equation*}
            \text{(1) $\sum_n a_n^2 <\infty$ \; and \; (2) $\sum_n a_n^{2} \Norm{c_n}_p <\infty$.}
        \end{equation*}

        Now define the vector $\bfv\coloneq\bigoplus_{n} a_n \bfv_n$.
        Then (1) implies that $\bfv\in\scrV$. By Minkowski, (2) implies that the matrix coefficient 
        % $\inn{\pi(g)\bfv, \bfv} = \sum_n a_n^2 c_n(g)$ lies in $\rmL^p(G)$.
        \begin{equation*}
            \inn{\pi(g)\bfv, \bfv} = \sum_n a_n^2 c_n(g) \quad \text{lies in $\rmL^p(G)$.}
        \end{equation*}

        To conclude, we argue that $\bfv$ is cyclic for $\pi$. 
        Since the summands $\gamma^{s_n}$ are mutually inequivalent irreducible, every closed $G$-invariant subspace of $\scrV$ is the Hilbert direct sum of a subfamily of $\setdef{\scrV^{s_n}:n}$.
        Since the projection of $\bfv$ to every $\scrV^{s_n}$ is the nonzero vector $a_n\bfv_n$, the closed cyclic subspace $\scrW$ generated by $\bfv$ has nonzero projection onto every summand. Thus $\scrW$ contains every $\scrV^{s_n}$, whence $\bfv$ is cyclic.
    \end{proof}

    \bibliographystyle{plain}
    {\small \bibliography{reflist.bib}}

\begin{thebibliography}{10}

\bibitem{amann2003groups}
Olivier~Eric Amann.
\newblock {\em Groups of tree-automorphisms and their unitary representations}.
\newblock PhD thesis, ETH Zurich, 2003.

\bibitem{bekka-harpe2020unitary}
Bachir Bekka and Pierre de~la Harpe.
\newblock {\em Unitary {R}epresentations of {G}roups, {D}uals, and
  {C}haracters}, volume 250 of {\em Math. Surv. Monogr.}
\newblock Providence, RI: American Mathematical Society (AMS), 2020.

\bibitem{bernshtein1974all-reductive}
I.~N. Bernshtein.
\newblock All reductive $p$-adic groups are tame.
\newblock {\em Functional Analysis and Its Applications}, 8(2):91--93, April
  1974.

\bibitem{casselman-milicic1982asymptotic}
William Casselman and Dragan Mili{\v c}i{\'c}.
\newblock Asymptotic behavior of matrix coefficients of admissible
  representations.
\newblock {\em Duke Mathematical Journal}, 49(4), December 1982.

\bibitem{cowling1978the-kunze-stein}
Michael Cowling.
\newblock The {K}unze--{S}tein phenomenon.
\newblock {\em Ann. Math.}, 107(2):209--234, 1978.

\bibitem{dixmier1977c-algebras}
Jacques Dixmier.
\newblock {\em {{\(C^*\)}}-algebras. {Translated} by {Francis} {Jellett}},
  volume~15 of {\em North-Holland Math. Libr.}
\newblock Elsevier (North-Holland), Amsterdam, 1977.

\bibitem{einsiedler-ward2025unitary}
Manfred Einsiedler and Thomas Ward.
\newblock {\em Unitary Representations and Unitary Duals}.
\newblock Springer Nature Switzerland, 2025.

\bibitem{gorfine2026a-spectral}
Yuval Gorfine.
\newblock A spectral gap absorption principle.
\newblock {\em Math. Ann.}, 395(2):25, 2026.

\bibitem{gorodnik-nevo2009the-ergodic}
Alexander Gorodnik and Amos Nevo.
\newblock {\em The Ergodic Theory of Lattice Subgroups (AM-172)}.
\newblock Princeton University Press, December 2009.

\bibitem{harish-chandra1953representations}
Harish-Chandra.
\newblock Representations of a semisimple {Lie} group on a {Banach} space. {I}.
\newblock {\em Trans. Am. Math. Soc.}, 75:185--243, 1953.

\bibitem{harish-chandra1959some}
Harish-Chandra.
\newblock Some results on differential equations and their applications.
\newblock {\em Proc. Natl. Acad. Sci. USA}, 45:1763--1764, 1959.

\bibitem{harish-chandra1970harmonic}
Harish-Chandra.
\newblock {\em Harmonic Analysis on Reductive p-adic Groups}.
\newblock Springer Berlin Heidelberg, 1970.

\bibitem{harish-chandra1984differential}
Harish-Chandra.
\newblock Differential equations and semisimple {Lie} groups.
\newblock In V.~S. Varadarajan, editor, {\em Harish-Chandra Collected Papers:
  Volume III (1959--1968)}, pages 57--120. Springer-Verlag, New York, 1984.
\newblock Unpublished manuscript [1960b].

\bibitem{harish-chandra1984some}
Harish-Chandra.
\newblock Some results on differential equations.
\newblock In V.~S. Varadarajan, editor, {\em Harish-Chandra Collected Papers:
  Volume III (1959--1968)}, pages 7--48. Springer-Verlag, New York, 1984.
\newblock Unpublished manuscript [1960a].

\bibitem{heinig-laat2024group}
Dennis Heinig, Tim de~Laat, and Timo Siebenand.
\newblock Group {{\(C^*\)}}-algebras of locally compact groups acting on trees.
\newblock {\em Int. Math. Res. Not.}, 2024(10):8520--8539, 2024.

\bibitem{knapp1986representation}
Anthony~W. Knapp.
\newblock {\em Representation Theory of Semisimple Groups. {An} overview based
  on examples}, volume~36 of {\em Princeton Math. Ser.}
\newblock Princeton University Press, Princeton, NJ, 1986.

\bibitem{kunze-stein1960uniformly}
R.~A. Kunze and E.~M. Stein.
\newblock Uniformly bounded representations and harmonic analysis of the $2
  \times 2$ real unimodular group.
\newblock {\em Am. J. Math.}, 82(1):1--62, 1960.

\bibitem{nebbia1988groups}
Claudio Nebbia.
\newblock Groups of isometries of a tree and the {Kunze-Stein} phenomenon.
\newblock {\em Pacific Journal of Mathematics}, 133(1):141--149, May 1988.

\bibitem{nebbia1999groups}
Claudio Nebbia.
\newblock Groups of isometries of a tree and the {CCR} property.
\newblock {\em Rocky Mountain Journal of Mathematics}, 29(1), March 1999.

\bibitem{samei-wiersma2024exotic}
Ebrahim Samei and Matthew Wiersma.
\newblock Exotic {$C^*$}-algebras of geometric groups.
\newblock {\em J. Funct. Anal.}, 286(2):110228, January 2024.

\bibitem{semal2023irreducibly}
Lancelot Semal.
\newblock {\em Irreducibly represented Lie groups and Nebbia's CCR conjecture
  on trees}.
\newblock PhD thesis, Universit{\'e} Catholique de Louvain, 2023.
\newblock arXiv:2306.04310.

\bibitem{semal2024radu}
Lancelot Semal.
\newblock {R}adu groups acting on trees are {CCR}.
\newblock {\em Journal of the Australian Mathematical Society},
  117(2):149--186, March 2024.

\bibitem{silberger1982asymptotics}
Allan~J. Silberger.
\newblock Asymptotics and integrability properties for matrix coefficients of
  admissible representations of reductive $p$-adic groups.
\newblock {\em Journal of Functional Analysis}, 45(3):391--402, February 1982.

\bibitem{veca2002the-kunze-stein}
Alessandro Veca.
\newblock {\em The {K}unze--{S}tein phenomenon}.
\newblock PhD thesis, Univ. of New South Wales, 2002.

\bibitem{willis1994the-structure}
G.~Willis.
\newblock The structure of totally disconnected, locally compact groups.
\newblock {\em Math. Ann.}, 300(2):341--363, 1994.

\end{thebibliography}

    \vspace*{1em}
    \noindent{\scshape Siwei Liang}: LMO, Université Paris-Saclay, Orsay, France\\
    Email: \texttt{siwei.liang@universite-paris-saclay.fr}
\end{document}